\theoremstyle{plain}
\newtheorem{theorem}{Theorem}
\newtheorem{proposition}[subsection]{Proposition}
\newtheorem{lemma}[subsection]{Lemma}
\theoremstyle{definition}
\newtheorem{definition}[subsection]{Definition}
\newtheorem{nothing*}[subsection]{}
\newcommand{\rien}[1]{}
\renewcommand{\epsilon}{\varepsilon}
\renewcommand{\phi}{\varphi}
\begin{document}
\renewcommand{\baselinestretch}{1.07}

\title[Symplectic Completion and involutive lines]
{Polynomial Completion of Symplectic Jets and surfaces containing involutive lines}

\author[E. L\o w]{Erik L\o w}
\address{Matematisk Institutt \\ Universitetet i Oslo
\\Postboks 1053 Blindern
\\ NO-0316 Oslo, Norway}
\email{elow@math.uio.no}
\email{erlendfw@math.uio.no}

\author[J.V. Pereira]
{Jorge V. Pereira}
\address{IMPA, Estrada Dona Castorina, 110, Horto, Rio de Janeiro,
Brasil}\email{jvp@impa.br}

\author[H. Peters]{Han Peters}
\address{Korteweg-de Vries Institute for Mathematics\\
University of Amsterdam\\
Science Park 904\\
1098 XH Amsterdam\\
The Netherlands}
\email{h.peters@uva.nl}

\author[E.F. Wold]{Erlend F. Wold}

\subjclass[2000]{32E20, 32E30, 32H02}
\date{January 14, 2007}
\keywords{}

\begin{abstract}
Motivated by work of Dragt and Abell on accelerator physics, we study the completion of symplectic jets by polynomial maps of low degrees.  We use Anders\'en-Lempert Theory to prove that symplectic completions always exist, and we prove the degree bound conjectured by Dragt and Abell in
the physically relevant cases.  However, we disprove the degree bound for $3$-jets in dimension 4.  This follows from the fact that if $\Sigma$ is the
disjoint union of $r=7$ involutive lines in $\mathbb P^3$, then $\Sigma$ is contained in a degree $d=4$  hypersurface, \emph{i.e.},
the restriction morphism $\iota:H^0(\mathbb P^3,\mathcal O(4))\rightarrow H^0(\Sigma,\mathcal O(4))$
has a nontrivial kernel (Todd).  We give two new proofs of this fact, and finally we show that
if $(r,d)\neq (7,4)$ then the map $\iota$ has maximal rank.

\end{abstract}

\maketitle \vfuzz=2pt

\vfuzz=2pt

\vskip 1cm

\section{Introduction}

Throughout this article we let $\omega$ denote the standard symplectic form on
$\mathbb C^{2n}$, \emph{i.e.}, $\omega(z)=\sum_{j=1}^ndz_j\wedge dz_{n+j}$.
If $\Omega\subset\mathbb C^{2n}$ is a domain, and $F:\Omega\rightarrow\mathbb C^{2n}$
is an analytic map, we say that $F$ is a symplectomorphism if
$F^*\omega=\omega$.  If $a\in\Omega$ and $F$ merely satisfies $(F^*\omega-\omega)(z)=O(\|z-a\|^d)$, then
we say that $F$ is symplectic to order $d$ at $a$. If $F$ is also a degree $d$ polynomial map,
then we call $F$ a symplectic $d$-jet at $a$.  In this article we consider the completion of symplectic jets

A linear subspace $\Lambda\subset\mathbb C^{2n}$ is
said to be Lagrangian if it has dimension $n$ and  $\omega|_{\Lambda}=0$. We will say that  a linear subspace $H\subset\mathbb P^{2n-1}$
is  involutive if $\pi^*(H)\cup\{0\}$ is Lagrangian, where $\pi$ is the projection
$\pi:\mathbb C^{2n}\setminus\{0\}\rightarrow\mathbb P^{2n-1}$.
For integers $(n,d)$ we set $N(n,d):=$ ${n+d-1}\choose{d}$, the dimension of the space of homogeneous polynomials of degree $d$ in $\mathbb C^{n}$.
We also write $M(2n,d):=\lceil N(2n,d)/N(n,d)\rceil$.
We denote by $Aut_{Sp}\mathbb C^{2n}$
the group of symplectic automorphisms of $\mathbb C^{2n}$.

\medskip

In this article we will prove the following results, motivated by accelerator physics.

\begin{theorem}\label{thm1}
Let $P$ be a symplectic $d$-jet at the origin in $\mathbb C^{2n}$.  Then
there exists a polynomial map $F\in Aut_{Sp}\mathbb C^{2n}$ such that
$(F-P)(z)=O(\|z\|^d)$.
\end{theorem}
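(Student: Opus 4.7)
The plan is to build $F$ by a finite induction, improving the jet match at the origin one homogeneous order at a time. More precisely, I will produce a sequence of polynomial maps $F_1,F_2,\ldots,F_{d-1}\in Aut_{Sp}\mathbb C^{2n}$ with $(F_k-P)(z)=O(\|z\|^{k+1})$ and set $F:=F_{d-1}$. The base case is the affine map $F_1(z):=P(0)+dP(0)z$; by the $1$-jet of the symplectic condition on $P$, $dP(0)$ is linear symplectic, so $F_1$ is an affine symplectomorphism and hence a polynomial element of $Aut_{Sp}\mathbb C^{2n}$.

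For the inductive step I assume $F_{k-1}$ is constructed with $k\geq 2$ and form the polynomial map $G:=F_{k-1}^{-1}\circ P$, which is the identity to order $k-1$; the inverse is polynomial because each $F_{k-1}$ is built as a composition of affine symplectic maps and explicit shears with manifestly polynomial inverses. Writing $G(z)=z+R_k(z)+O(\|z\|^{k+1})$ with $R_k$ homogeneous of degree $k$, a direct expansion shows that the degree-$(k-1)$ piece of $G^*\omega-\omega$ equals $d(\iota_{R_k}\omega)$. Since $F_{k-1}$ is symplectic and $P^*\omega-\omega=O(\|z\|^d)$, we have $G^*\omega-\omega=O(\|z\|^d)$, and the inequality $k-1<d-1$ forces $d(\iota_{R_k}\omega)=0$. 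By Poincar\'e's lemma on $\mathbb C^{2n}$ there is a homogeneous polynomial $H$ of degree $k+1$ with $\iota_{R_k}\omega=dH$, so $R_k=X_H$ is a Hamiltonian vector field.

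The key algebraic input is polarization: over $\mathbb C$, every homogeneous polynomial of degree $m\geq 1$ is a finite sum $\sum_i c_i\,\ell_i^{\,m}$ of $m$-th powers of linear forms. Apply this to $H$. For each summand, $X_{c_i\ell_i^{\,k+1}}=(k+1)c_i\ell_i^{\,k}X_{\ell_i}$ is a scalar multiple of the constant vector $X_{\ell_i}$, and $\ell_i$ is preserved under its own flow because $\omega(X_{\ell_i},X_{\ell_i})=0$. Consequently the time-$1$ flow is the explicit polynomial symplectomorphism
\[
\Phi_i(z)\;=\;z+(k+1)c_i\,\ell_i(z)^{k}X_{\ell_i},
\]
with polynomial inverse given by flipping the sign of $c_i$.

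Set $\Phi:=\Phi_N\circ\cdots\circ\Phi_1$ and $F_k:=F_{k-1}\circ\Phi$. A routine Taylor expansion shows that composing time-$1$ maps of vector fields that are all homogeneous of degree $k$ produces cross-terms of order $\|z\|^{2k-1}$ only, and since $2k-1\geq k+1$ for $k\geq 2$ this gives $\Phi(z)=z+X_H(z)+O(\|z\|^{k+1})=G(z)+O(\|z\|^{k+1})$, so $F_k\equiv P\pmod{\|z\|^{k+1}}$. Iterating through $k=d-1$ produces $F$. The main obstacle I expect is the bookkeeping in the inductive step: one must verify the Hamiltonian structure of $R_k$ (the single place where the symplectic hypothesis on $P$ is used) and the composition cancellation, which breaks down precisely at $k=1$ where $2k-1=k$, forcing us to start the induction with the separately-handled affine map $F_1$.
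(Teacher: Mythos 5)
Your proposal is correct and follows essentially the same route as the paper: an order-by-order induction in which the degree-$k$ discrepancy of $F_{k-1}^{-1}\circ P$ (resp.\ $P\circ F_k^{-1}$ in the paper) is shown to be a homogeneous Hamiltonian field via the $O(\|z\|^d)$ symplectic condition, its potential is written as a sum of powers of linear forms (the paper's Lemma \ref{fundamental}), and the correction is realized by composing the resulting symplectic shears, whose cross-terms only affect orders $\geq 2k-1>k$. The only (harmless) differences are cosmetic: you treat the constant term $P(0)$ explicitly with an affine symplectomorphism and phrase the shears as time-$1$ Hamiltonian flows, exactly the maps $S_j$ used in the paper.
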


\begin{theorem}\label{thm2}
Let $P$ be a symplectic $d$-jet at the origin in $\mathbb C^{6}$ with
$2\leq d\leq 11$.  Then
there exists a polynomial map $F\in Aut_{Sp}\mathbb C^{6}$ with
$deg(F)\leq d^{M(6,d+1)}$  such that
$(F-P)(x)=O(\|z\|^d)$.
\end{theorem}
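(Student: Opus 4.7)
My plan is to build $F$ as a composition of symplectic shears, via Anders\'en--Lempert theory.  The atomic move: for each Lagrangian $3$-plane $L\subset\mathbb C^6$ with Lagrangian projection $\pi_L:\mathbb C^6\to L$ and polynomial $f$ on $L$, the time-$1$ Hamiltonian flow
\[
  S_L^f(z)=z+J\nabla(f\circ\pi_L)(z)
\]
of $H=f\circ\pi_L$ is a polynomial symplectomorphism of $\mathbb C^6$ of degree $\deg f-1$ (linearity in $t$ holds because $H$ depends only on Poisson-commuting linear forms).  I would then proceed by induction on $k=1,\ldots,d-1$, producing polynomial $F_k\in \Aut_{Sp}\mathbb C^6$ with $(F_k-P)(z)=O(\|z\|^{k+1})$, starting from $F_1$ equal to the linear part of $P$ (a linear symplectomorphism since $d\ge 2$).

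At the inductive step, the lowest-order remainder $R_k:=(P\circ F_{k-1}^{-1}-\mathrm{id})_k$ can be identified, using the fact that $P\circ F_{k-1}^{-1}$ is symplectic to order $d\ge k+1$, with the Hamiltonian vector field of a uniquely determined homogeneous polynomial $H_k\in H^0(\mathbb P^5,\mathcal O(k+1))$.  To cancel $R_k$ I must express
\[
  H_k=\sum_{i=1}^{r_k} f_i\circ\pi_{L_i},
\]
with Lagrangian planes $L_i$ and $f_i\in H^0(L_i,\mathcal O(k+1))$; composing $F_{k-1}$ with the corresponding shears $S_{L_i}^{f_i}$ then absorbs $R_k$ and yields $F_k$.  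Projectivizing, the $\overline{L}_i\subset\mathbb P^5$ are involutive (projective) planes, and such a decomposition exists precisely when the restriction map
\[
  \iota_k:H^0(\mathbb P^5,\mathcal O(k+1))\longrightarrow H^0\big(\textstyle\bigsqcup_i\overline{L}_i,\mathcal O(k+1)\big)
\]
is surjective; a dimension count forces $r_k\ge M(6,k+1)$.

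It remains to handle the degree bookkeeping and the algebraic-geometric input.  For the degree, I would fix once and for all a single collection of $M(6,d+1)$ involutive planes usable at every stage $k$, so that $F=F_{d-1}$ is a composition of at most $M(6,d+1)$ shears each of degree $\le d$; since degrees multiply under composition, $\deg F\le d^{M(6,d+1)}$.  The main obstacle is the maximal-rank input: surjectivity of $\iota_k$ for every $k+1\le d+1\le 12$ in the involutive-planes setting of $\mathbb P^5$.  This is the natural analogue in $\mathbb P^5$ of the involutive-lines statement in $\mathbb P^3$ that appears later in the paper; I would verify it case by case for the pairs $(M(6,d+1),d+1)$ with $d\le 11$, confirming that no analogue of the Todd degeneration arises in this range.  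Once this input is secured, the inductive construction runs and produces the required $F$.
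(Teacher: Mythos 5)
Your overall strategy is the same as the paper's: match the jet order by order with symplectic kick-maps attached to Lagrangian subspaces, reduce solvability at each order (via the fact that the lowest-order discrepancy of a map symplectic to order $d$ is a Hamiltonian field) to the statement that potentials pulled back from $M(6,m)$ Lagrangian subspaces span all homogeneous polynomials of degree $m\le d+1$, and check that spanning statement case by case in the range $m\le 12$ --- which is exactly the paper's Theorem \ref{thm3}, proved there by randomized integer computations (ranks modulo primes of a $6188$-column matrix). Two points need repair, one small and one substantive. The small one: your cohomological criterion is stated backwards. By the apolarity argument (Lemma \ref{fundamental}), the decomposition $H_k=\sum_i f_i\circ\pi_{L_i}$ is possible for every $H_k$ exactly when no nonzero form of degree $k+1$ vanishes on the union of the planes, i.e.\ when $\iota_k$ is \emph{injective} (equivalently of maximal rank, since with $r=M(6,k+1)$ planes the target has dimension $r\binom{k+3}{2}\ge\binom{k+6}{5}$). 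Surjectivity of $\iota_k$ is in general impossible for dimension reasons (degree $3$: target $60$, source $56$); it happens to coincide with injectivity only when the dimensions match, e.g.\ at degree $12$ where $6188=68\cdot 91$. As written, the condition you propose to verify is vacuous, though the intended ``maximal rank'' input is clearly the right one.

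The substantive gap is the degree bookkeeping, which is the whole point of Theorem \ref{thm2} beyond Theorem \ref{thm1}. In your stage-by-stage induction, stage $k$ appends up to $M(6,k+2)$ \emph{new} shears of degree $k+1$, and shears attached to the same Lagrangian at different stages are separated by the intervening corrections, so they cannot be merged; fixing one collection of planes does not change this. Hence $F_{d-1}$ is a composition of roughly $\sum_k M(6,k+2)$ kick-maps, and the multiplicativity of degrees only gives a bound like $\prod_k (k+1)^{M(6,k+2)}$, which exceeds $d^{M(6,d+1)}$. To obtain the stated bound you must reorganize the induction: fix the ansatz $F=K_M\circ\cdots\circ K_1\circ L$ with exactly one kick-map per Lagrangian, each with a (non-homogeneous) potential of degree at most $d+1$, and solve for the homogeneous pieces of the $M$ potentials degree by degree --- at each degree the dependence on the new top pieces is linear, the cross terms coming from lower-degree pieces are known data, and solvability is again the spanning statement; only then is $F$ a composition of $M(6,d+1)$ maps of degree at most $d$, giving $\deg F\le d^{M(6,d+1)}$. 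Finally, ``confirming that no analogue of the Todd degeneration arises'' cannot be waved through: it is precisely the computational content of Theorem \ref{thm3}, and the analogous statement genuinely fails in $\mathbb C^4$ at degree $4$, so the case-by-case verification is an essential, nontrivial part of any proof of Theorem \ref{thm2}.
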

The proof of Theorem \ref{thm1} uses Anders\'{e}n-Lempert theory, and
follows closely the work of
Forstneri\v{c} \cite{For1}.  Theorem \ref{thm2} follows from the same proof
and the following:

\begin{theorem}\label{thm3}
Let $3\leq d\leq 12$ and let $\Sigma$ be a disjoint union of
$r=M(6,d)$ involutive
planes in general position in
$\mathbb P^5$. Then the restriction
map
$$
H^0(\mathbb P^5,\mathcal O(d))\rightarrow H^0(\Sigma,\mathcal O(d))
$$
has maximal rank.
\end{theorem}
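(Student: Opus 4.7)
The plan is to reduce the statement to a finite rank computation on the Lagrangian Grassmannian $LG(3,6)$ and then verify it in each relevant degree.

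First, let $W\subset LG(3,6)^r$ be the Zariski-open subset of ordered $r$-tuples of pairwise disjoint Lagrangian $3$-planes in $(\mathbb{C}^6,\omega)$. The rank of the restriction map
$$
\rho_{\Sigma}\colon\ H^0(\mathbb{P}^5,\mathcal{O}(d))\longrightarrow H^0(\Sigma,\mathcal{O}(d))
$$
is lower semicontinuous in $\Sigma\in W$. Since $r=M(6,d)=\lceil N(6,d)/N(3,d)\rceil$ forces $rN(3,d)\ge N(6,d)$, "maximal rank" amounts to injectivity of $\rho_{\Sigma}$, i.e.\ the assertion that no nonzero homogeneous degree $d$ polynomial on $\mathbb{C}^6$ vanishes on all planes of $\Sigma$. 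By semicontinuity it therefore suffices to exhibit one configuration $\Sigma_0\in W$ for which $\rho_{\Sigma_0}$ is injective.

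Second, I coordinatise. In Darboux coordinates $(x_1,x_2,x_3,y_1,y_2,y_3)$ on $(\mathbb{C}^6,\omega)$, any Lagrangian $3$-plane transverse to $\{x=0\}$ is the graph $y=Ax$ of a unique symmetric $3\times 3$ matrix $A$; pairwise disjointness and general position become Zariski-open conditions on the tuple $(A_1,\ldots,A_r)$. The restriction of $f(x,y)\in H^0(\mathbb{P}^5,\mathcal{O}(d))$ to the Lagrangian plane $L_A$ is the form $f(x,Ax)\in\mathrm{Sym}^d(\mathbb{C}^3)^*$, so choosing $A_1,\ldots,A_r$ produces a concrete linear map
$$
\Phi(A_1,\ldots,A_r)\colon\ H^0(\mathbb{P}^5,\mathcal{O}(d))\longrightarrow \bigoplus_{i=1}^r\mathrm{Sym}^d(\mathbb{C}^3)^*
$$
whose matrix in the monomial bases has entries that are polynomials of degree $\le d$ in the coefficients of the $A_i$. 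Injectivity of $\rho_{\Sigma_0}$ is equivalent to $\Phi$ having rank $N(6,d)$.

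Third, I carry out the verification degree by degree. For each $d\in\{3,\ldots,12\}$ the problem reduces to exhibiting $r$ symmetric $3\times 3$ matrices with, say, integer entries, whose associated map $\Phi$ has full rank; this can be checked by exact rank computation in Macaulay2 or an equivalent system. A convenient class of candidates consists of diagonal matrices $\mathrm{diag}(\lambda_i,\mu_i,\nu_i)$ supplemented by a small number of rank-one perturbations chosen so that the images of the monomial basis spread out independently in the summands $\mathrm{Sym}^d(\mathbb{C}^3)^*$. As an alternative, one may attempt a Horace-style degeneration, specialising one Lagrangian plane into a well-chosen hyperplane and inducting on $r$ using the resulting residue/trace exact sequence; however the Lagrangian condition is preserved only for flag-compatible hyperplanes, which makes this route awkward.

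The main obstacle is the arithmetic irregularity of the deficit $\delta(d):=rN(3,d)-N(6,d)$. For example $\delta(3)=4$ while $\delta(12)=0$, so there is no slack at the top of the range. This arithmetic unevenness prevents a single uniform Horace induction across $d=3,\ldots,12$ and is the reason a case-by-case treatment via the semicontinuity reduction combined with explicit numerical examples in each of the ten degrees seems to be the most efficient route.
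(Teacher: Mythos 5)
Your proposal follows essentially the same route as the paper: reduce by semicontinuity to exhibiting one explicit configuration, parametrize pairwise disjoint Lagrangian planes by symmetric $3\times 3$ matrices (the paper's normal-form lemma), and certify full rank of the resulting integer matrix by computer --- the paper phrases this dually, checking that the powers $(\mathbf{a}\cdot z)^d$ for $\mathbf{a}$ in the chosen Lagrangians span the space of degree-$d$ forms, which is equivalent to injectivity of the restriction map by its Lemma 2.1. The only differences are practical: the paper computes the rank modulo suitably chosen primes because exact integer rank computation becomes infeasible near the top of the range (a $6188\times 6188$ matrix at $d=12$), and of course the argument is only complete once the computation is actually run and a full-rank configuration recorded, as the paper does.
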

Recall that the global sections of $\mathcal O(d)$ correspond to homogeneous polynomials of degree $d$. Theorem \ref{thm3} proves a conjecture by Dragt and Abell \cite{DragtAbell} on bounding the degree of
symplectic completions in the cases relevant for accelerator physics. The proof of Theorem \ref{thm3} uses computer algebra and is described in Section \ref{section:three}. We note that the proofs of Theorems \ref{thm1}, \ref{thm2} and \ref{thm3} are constructive, and give algorithms for finding the symplectic completions.

However, the conjecture does not hold in general:
\begin{theorem}(Todd)\label{thm4}
Let $\Sigma$ be a collection of $7$ disjoint involutive lines in $\mathbb P^3$.
Then $\Sigma$ is contained in a quartic.
\end{theorem}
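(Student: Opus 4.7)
The plan is to reformulate the problem via transpose-duality, reduce to generic configurations by semicontinuity, and then exhibit the required linear dependence.

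First I would identify $H^0(\mathbb P^3,\mathcal O(4))$ with $\operatorname{Sym}^4(\mathbb C^4)^*$ and each $H^0(L_i,\mathcal O(4))$ with $\operatorname{Sym}^4 V_i^*$, where $V_i\subset\mathbb C^4$ is the Lagrangian $2$-plane covering the involutive line $L_i$. The restriction map $\iota_\Sigma$ has nontrivial kernel if and only if its transpose
$$
\iota_\Sigma^*\colon \bigoplus_{i=1}^7 \operatorname{Sym}^4 V_i \longrightarrow \operatorname{Sym}^4 \mathbb C^4
$$
fails to be surjective. Since both spaces have dimension $35$, this is equivalent to producing a nontrivial relation $t_1+\cdots+t_7=0$ with $t_i\in\operatorname{Sym}^4 V_i$.

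Next, since $\Sigma\mapsto\dim\ker\iota_\Sigma$ is upper semicontinuous on the irreducible parameter space $T$ of $7$-tuples of pairwise disjoint involutive lines in $\mathbb P^3$, the locus $\{\Sigma: \dim\ker\iota_\Sigma\geq 1\}$ is closed in $T$. Hence producing such a relation for a Zariski dense subset of $T$ forces it to hold for every $\Sigma\in T$, and we may restrict attention to generic configurations. Equivalently, writing down the $35\times 35$ restriction matrix with entries polynomial in the Plücker coordinates of the $L_i$'s, it suffices to show that its determinant vanishes identically along the subvariety of $\mathrm{Gr}(2,4)^7$ cut out by the Lagrangian conditions.

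Producing the relation is the heart of the argument. Using the $\omega$-induced isomorphism $\mathbb C^4\cong(\mathbb C^4)^*$, the inclusion $\operatorname{Sym}^4 V_i\hookrightarrow\operatorname{Sym}^4\mathbb C^4$ is transported to the inclusion $\operatorname{Sym}^4 V_i^{\mathrm{ann}}\hookrightarrow\operatorname{Sym}^4(\mathbb C^4)^*$ of quartics supported on the two linear forms generating the ideal of $V_i$. Thus the question becomes whether the $7$ such $5$-dimensional ideal-supported subspaces fail to span $\operatorname{Sym}^4(\mathbb C^4)^*$. I would pursue this along two lines: (a) parameterize Lagrangian planes transverse to a fixed $V_\infty$ as graphs $V_A=\{(x,Ax)\}$ of symmetric $2\times 2$ matrices $A$, write the $35$ vanishing conditions explicitly in the entries of $A_1,\dots,A_7$, and verify the identical vanishing of the determinant by computer algebra; and (b) locate a highly symmetric explicit configuration, for instance seven involutive lines drawn from the lines of a carefully chosen classical quartic (Kummer- or Cayley-type), and write the relation by inspection.

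The main obstacle is precisely step (a)/(b): the duality and the semicontinuity step are formal, so the essential content of Todd's theorem lies in establishing this single syzygy. A slicker alternative would be to exploit the fact that the symplectic form $\omega$ induces a nondegenerate $Sp(4,\mathbb C)$-invariant symmetric bilinear form $B$ on $\operatorname{Sym}^4\mathbb C^4$ for which each $\operatorname{Sym}^4 V_i$ is isotropic (since $\omega(v,w)=0$ for $v,w\in V_i$); however, isotropy alone does not suffice and must be combined with the specific $Sp(4)$-orbit structure of the Veronese-like image $\{\operatorname{Sym}^4 V:V\in\mathrm{LG}(2,4)\}\subset\mathrm{Gr}(5,35)$ to close the argument.
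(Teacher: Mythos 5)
There is a genuine gap: your write-up correctly performs the two formal reductions (the transpose/duality reformulation, turning injectivity of the $35\times 35$ restriction map into the existence of a syzygy $t_1+\cdots+t_7=0$ with $t_i\in \operatorname{Sym}^4 V_i$, and the semicontinuity/irreducibility argument reducing to generic configurations), but it never establishes the one thing that actually constitutes Todd's theorem, namely that the syzygy exists, or equivalently that the $35\times 35$ determinant vanishes identically on the Lagrangian locus. You say so yourself: the ``heart of the argument'' is deferred to either (a) a computer verification or (b) an unspecified symmetric configuration, and neither is carried out. Moreover (a) is not obviously feasible as stated: the determinant is a polynomial of degree $50$ in the $15$ entries of the symmetric matrices $A_3,\dots,A_7$, so a naive interpolation grid has $51^{15}$ points; making this tractable requires nontrivial extra input (in the paper: dividing out the product $Q$ of the $15$ prime factors $\det A_j$, $\det(A_j-A_i)$ to drop the degree in each variable to $4$ or $5$, and exploiting an $S_2\times S_5$ symmetry to cut the grid to about $4.9\times 10^8$ points). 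Your ``slicker alternative'' via the $Sp(4,\mathbb C)$-invariant quadratic form on $\operatorname{Sym}^4\mathbb C^4$ is, as you concede, inconclusive: isotropy of each $5$-dimensional piece does not obstruct the seven of them from spanning, and no orbit-structure argument is supplied.

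For comparison, the paper closes exactly this gap in two ways. One is the computer-algebra proof sketched above. The other is a short geometric proof using the contact structure: the lines being involutive means they are tangent to the contact distribution $\mathcal C\subset T\mathbb P^3$ with $\det\mathcal C\simeq\mathcal O_{\mathbb P^3}(2)$; a cohomology count gives $\dim H^0(\mathbb P^3,\mathcal C(1))\ge 16$, each line imposes at most $2$ conditions on such twisted vector fields (because the normal sheaf $\mathcal C|_\ell/T\ell$ is trivial, which is where involutivity enters), so there are two independent sections $v_1,v_2$ tangent to $\mathcal C$ and preserving all seven lines, and $v_1\wedge v_2\in H^0(\mathbb P^3,\mathcal O_{\mathbb P^3}(4))$ is the desired quartic; a separate argument (using $H^0(\mathbb P^3,\mathcal C(-1))=0$ and the classification of linear vector fields preserving six generic lines) shows $v_1\wedge v_2\not\equiv 0$. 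Something of this kind — an actual construction of the relation, a feasible verified computation, or the contact-geometric argument — is what your proposal still needs before it is a proof.
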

We give two new proofs of this result. In Section \ref{section:three} we prove Theorem \ref{thm4} by using computer algebra, and in Section \ref{section:five} by  exhibiting  the sought  quartic as the tangency locus of two foliations tangent to the contact structure and the 7 involutive lines.

Finally we give a complete version of Theorem \ref{thm3} in dimension $3$:
\begin{theorem}\label{thm5}
Let $\Sigma$ be a disjoint union of $r$ involutive lines in general
position in $\mathbb P^3$ and $d\geq 0$ be an integer.  If $(r,d)\neq (7,4)$
then the restriction map
$$
H^0(\mathbb P^3,\mathcal O(d))\rightarrow H^0(\Sigma,\mathcal O(d))
$$
has maximal rank.
\end{theorem}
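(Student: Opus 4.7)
The plan is to prove maximal rank by a Horace-style induction on the number $r$ of involutive lines, identifying $(7,4)$ as the unique obstruction to propagation. By upper semicontinuity of $h^0(\mathbb P^3, \mathcal I_\Sigma(d))$ as $\Sigma$ varies in $\mathrm{LG}(2,4)^r$ (where $\mathrm{LG}(2,4)$ is the three-dimensional Lagrangian Grassmannian parametrizing involutive lines in $\mathbb P^3$), it suffices to exhibit, for each admissible pair $(r,d)$, a single configuration $\Sigma_0$ whose ideal sheaf has the expected number of global sections $E(r,d):=\max\{\,0,\,\binom{d+3}{3}-r(d+1)\,\}$. Small cases ($d\leq 2$, or $r$ small) are dispatched directly or by the computer-algebra method of Section \ref{section:three}; these serve as base cases for the induction.

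For the inductive step, assume maximal rank is known in degree $d$ for generic configurations of $r-1$ involutive lines. Given such a $\Sigma_{r-1}$ and an additional involutive line $L$ disjoint from $\Sigma_{r-1}$, the trace sequence
\[
0 \to H^0(\mathbb P^3,\mathcal I_{\Sigma_{r-1}\cup L}(d)) \to H^0(\mathbb P^3,\mathcal I_{\Sigma_{r-1}}(d)) \xrightarrow{\rho_L} H^0(L,\mathcal O_L(d))
\]
shows that maximal rank propagates to $\Sigma_{r-1}\cup L$ provided $\rho_L$ attains its maximal rank $\min\{\,h^0(\mathcal I_{\Sigma_{r-1}}(d)),\,d+1\,\}$ for at least one such $L$. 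In the injectivity regime this is the requirement that no nonzero section of $\mathcal I_{\Sigma_{r-1}}(d)$ vanishes along $L$; by semicontinuity in $L$ this in turn amounts to showing that the sublocus of $\mathrm{LG}(2,4)$ consisting of involutive lines contained in some member of the linear system $|\mathcal I_{\Sigma_{r-1}}(d)|$ is a \emph{proper} subvariety.

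The crux is therefore the geometric statement: except when $(r,d)=(7,4)$, the incidence $\{(L,[f])\in \mathrm{LG}(2,4)\times |\mathcal I_{\Sigma_{r-1}}(d)| : L\subset Z(f)\}$ does not dominate $\mathrm{LG}(2,4)$. For a generic degree-$d$ hypersurface the expected codimension of the locus of involutive lines it contains is $d+1$, which already exceeds $\dim \mathrm{LG}(2,4)=3$ as soon as $d\geq 3$; hence a dominant incidence forces the linear system to consist of \emph{special} hypersurfaces covered by an unusually large family of Legendrian lines. The pair $(r,d)=(7,4)$ is precisely where this phenomenon arises: Theorem \ref{thm4}, in the guise given in Section \ref{section:five}, exhibits the Todd quartic through a generic $\Sigma_{6}$ as a hypersurface swept out by a positive-dimensional family of involutive lines, and this is exactly what prevents a generic seventh involutive line from imposing the full $d+1=5$ conditions. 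Ruling out analogous behaviour for every other $(r,d)$---the main obstacle---rests on classifying the hypersurfaces in $\mathbb P^3$ swept out by Legendrian lines (extending the characteristic-foliation analysis of Section \ref{section:five}), combined with a Castelnuovo-type specialization to a hyperplane $H\subset\mathbb P^3$, exploiting that every such $H$ meets $\mathrm{LG}(2,4)$ in a characteristic $\mathbb P^1$-family of involutive lines through a single common point; a finite list of borderline cases in which $\binom{d+3}{3}$ and $r(d+1)$ nearly coincide is handled by direct computer verification.
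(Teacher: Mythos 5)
Your plan has a genuine gap, and it sits exactly where the real difficulty of the theorem lies. First, the one-line-at-a-time induction on $r$ only reduces the step to a tractable-looking statement in the \emph{injectivity} regime: when $h^0(\mathcal I_{\Sigma_{r-1}}(d))\le d+1$ it is indeed enough that no nonzero section vanish along some involutive $L$, i.e.\ that your incidence variety not dominate the Lagrangian Grassmannian. But for $r$ below the critical value you need the \emph{surjectivity} of $\rho_L$, i.e.\ that the restriction of $H^0(\mathcal I_{\Sigma_{r-1}}(d))$ to a generic involutive line be the complete series $H^0(L,\mathcal O_L(d))$; this is a different and much harder statement, it is not implied by "$L$ is not in the base locus,'' and your proposal never addresses it. This is precisely the obstruction that forces the Horace-type argument the paper uses (following Hartshorne--Hirschowitz): induction on the \emph{degree} via the critical statements $(H_d)$, specializing a large block of the involutive lines onto a special quadric $Q$, using the residual exact sequence to drop from degree $d$ to $d-2$, with Chasles' theorem controlling how involutive lines can meet $Q$, and with an extra intermediate statement $(H'_{d-2})$ involving degenerate conics with embedded points when $d\equiv 1\pmod 3$; base cases and the exceptional degree $4$ ($r=6$ surjective, $r=8$ injective) are checked by computer. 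Second, even in the injectivity regime your key claim --- that for $(r,d)\neq(7,4)$ the incidence does not dominate $\mathrm{LG}(2,4)$ --- is not proved: the "expected codimension $d+1$'' count applies to generic hypersurfaces, not to members of $|\mathcal I_{\Sigma_{r-1}}(d)|$, and the proposed remedy (a classification of hypersurfaces swept out by Legendrian lines, plus unspecified specializations and computer checks) is exactly the content of the theorem and is left entirely unproven. As stated, the proposal is a strategy outline whose two essential steps are missing.

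A smaller but real inaccuracy: you describe the Todd quartic through six general involutive lines as "swept out by a positive-dimensional family of involutive lines.'' Section \ref{section:five} does not show this, and it does not follow from Theorem \ref{thm4}: the failure at $(7,4)$ means the incidence over the $4$-dimensional system of quartics through $\Sigma_6$ dominates the $3$-dimensional family of involutive lines, which is perfectly compatible with each individual quartic containing only finitely many involutive lines. So the mechanism you propose to "explain'' the exception, and by extension the classification you would need to exclude it elsewhere, is not correctly identified.
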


Our proof in Section \ref{section:six} follows very closely the work of Hartshorne and Hirschowitz \cite{HH1} who proved the result for non-involutive lines.

\medskip

\noindent {\bf Motivation from Accelerator Physics.} Let us briefly outline the motivation from accelerator physics for the problems considered in this article, for more details see \cite{DragtAbell}. Consider a circular particle accelerator where two groups of particles circulate a large number of times in opposite directions, before hitting each other at a given location. In order to control this collision, a precise understanding of the orbits of the particles is needed. Instead of following the continuous flows of these orbits, we can take a cross-section of the accelerator and compute the map $G$ that, given a current intersection of the orbits with this cross-section, computes the next intersection. It turns out that this is a symplectic map in $6$ real variables.

Let us assume that the map $G$ has an ideal orbit. After rescaling we may assume that $G(0) = 0$. We also assume that $G$ is given by a convergent power series expansion. We can then try to measure the first so many coefficients of this expansion. In practice the power series of $G$ is measured up to and including degree $11$. Denote the degree $11$-jet of $G$ by $P$. Then in general $P$ is only symplectic to order $11$, and the iterative behavior of $P$ can be distinctly different from the behavior of $G$. The idea is now to find a new symplectomorphism $F$ that has exactly the same $11$-jet as $P$. It turns out that even though the higher order terms of $F$ are unrelated to the higher order terms of $G$, the iterative behavior of $F$ may approximate the iterative behavior of $G$ better than $P$ does.

The goal is therefore to not only prove that $P$ has a symplectic completion, but also to find a method for actually finding a symplectic completion of $P$, and preferably a completion with a relatively small complexity so that a large number of iterations of $F$ can be computed. In this article we find symplectic completions given by polynomials with relatively small degrees.

\medskip

{\bf Acknowledgement.}
We thank Meike Wortel for help with the Mathematica code.
The third author was supported by a SP3-People Marie Curie Actionsgrant in the project Complex Dynamics (FP7-PEOPLE-2009-RG, 248443).
The fourth author was supported by the NFR grant 209751/F20.

\section{Background: A question of Dragt and Abell}

Throughout this article we denote vectors in $\mathbb C^{2n}$ by boldface letters ${\bf a}$ when we want to emphasize that we do not regard them as variables, and we let ${\bf a}\cdot z$ denote the product ${\bf a}\cdot z=\sum_{j=1}^{2n}{\bf a_j} \cdot z_j$.
We let $J$ denote the symplectic involution
$$
J(z)=(-z_{n+1},...,-z_{2n},z_1,...,z_n).
$$
We denote by $V_{n,d}$ the vector space of $d$-homogenous polynomials in $n$ variables.

Recall the definition of a \emph{kick-map}; a holomorphic automorphism of the simple form
\begin{equation}\label{kick}
G(z)=(z_1+g_1(z_{n+1},...,z_{2n}),\cdot\cdot\cdot, z_n+g_n(z_{n+1},...,z_{2n}),z_{n+1},\cdot\cdot\cdot ,z_{2n}).
\end{equation}
If all the $g_j$'s are $d$-homogenous polynomials we call $G$ a $d$-homogenous kick-map.
A $d$-homogenous kick-map is symplectic if and only if the matrix
\begin{equation}
(\partial g_j/\partial z_{n+i})_{1\leq i,j\leq n}
\end{equation}
is symmetric, \emph{i.e.}, if the differential form $\sum_{j=1}^ng_j(z)dz_{n+j}$ is closed, hence also exact.  In particular we may identify the space of symplectic kick-maps with the
space of $(d+1)$-homogenous polynomials in $n$ variables.  We can now
try to find symplectic completions by considering conjugations $L\circ G\circ L^{-1}$ of
symplectic kick-maps by symplectic linear maps, and compositions of these.  This reduces
to finding a suitable basis for the space $V_{n,d+1}$ (see Section 4).
Since we will mostly be working with potentials of the $Q_d$'s we will from now on write $d$ instead of $d+1$. \

Related to this, Dragt and Abell pose the following problem \cite{DragtAbell}:
given $d\in\mathbb N$, find the least number
of linear maps $L_j\in Aut_{Sp}\mathbb C^{2n}, j=1,...,M$, such
that any $P\in V_{2n,d}$ can be written as
\begin{equation}\label{sum}
P=\sum_{j=1}^M Q_j\circ L_j,
\end{equation}
where the $Q_j$'s are $d$-homogenous polynomials in the
variables $(z_{n+1},...,z_{2n})$.  They conjecture
that one always can achieve (\ref{sum}) with $M=M(2n,m)$,
and they note that in the physically relevant cases the dimension $2n$ equals $6$, and the degree $d$ is at most $12$. Recall that $d=12$ corresponds to symplectic jets of degree $11$. Dragt and Abell claim to have found linear maps $(L_j)$ that confirm the conjecture for degrees up to $6$. \

To see how this conjecture is related to Theorem \ref{thm3} we use the following fundamental fact.

\begin{lemma}\label{fundamental}
Let $\Sigma\subset\mathbb C^n$ be any subset, and let $d\in\mathbb N$.   Then the elements in
$$
E=\{({\bf a}\cdot z)^d:\bf a\rm\in\Sigma\}
$$
form a basis for the vector space $V_{n,d}$,
if and only if there does not exist a $d$-homogenous polynomial $P$ which vanishes on
$\Sigma$.  In particular, if $\Sigma=\mathbb C^n$ then $E$ spans $V_{n,d}$.
\end{lemma}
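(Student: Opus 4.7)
The plan is to exploit the apolar (Sylvester) duality on $V_{n,d}$. I would first introduce the non-degenerate symmetric bilinear pairing $\langle\,\cdot\,,\,\cdot\,\rangle$ on $V_{n,d}$ defined by
$$
\langle P, Q \rangle \;:=\; P(\partial)\, Q,
$$
where $P(\partial)$ is the constant-coefficient differential operator obtained from $P$ by substituting $\partial/\partial z_j$ for $z_j$. Since $P$ and $Q$ are both $d$-homogeneous, the result is a scalar, and the monomial basis $\{z^\alpha\}_{|\alpha|=d}$ is orthogonal with $\langle z^\alpha, z^\alpha\rangle = \alpha! \neq 0$, so the pairing is non-degenerate.

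The key computation is then the evaluation formula
$$
\langle (\mathbf{a}\cdot z)^d,\, Q(z)\rangle \;=\; d!\,Q(\mathbf{a}) \qquad \text{for all } \mathbf{a}\in\mathbb C^n,\ Q\in V_{n,d}.
$$
This is a routine multinomial expansion: writing $(\mathbf{a}\cdot z)^d = \sum_{|\alpha|=d}\binom{d}{\alpha}\mathbf{a}^\alpha z^\alpha$, applying $\partial^\alpha$ to a monomial $z^\beta$ of degree $d$ yields $\alpha!$ when $\alpha=\beta$ and $0$ otherwise, so the cross-terms cancel and one gets $\sum_{|\alpha|=d}\binom{d}{\alpha}\alpha!\,\mathbf{a}^\alpha c_\alpha = d!\sum c_\alpha\mathbf{a}^\alpha = d!\,Q(\mathbf{a})$.

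With this in hand, let $W := \mathrm{Span}\,E \subset V_{n,d}$. By the evaluation formula, the orthogonal complement with respect to $\langle\,\cdot\,,\,\cdot\,\rangle$ is
$$
W^\perp \;=\; \{Q\in V_{n,d} : Q(\mathbf{a})=0 \text{ for every } \mathbf{a}\in\Sigma\},
$$
which is precisely the space of $d$-homogeneous polynomials vanishing on $\Sigma$. Non-degeneracy of the pairing gives $W=V_{n,d}$ if and only if $W^\perp = 0$, proving the claimed equivalence. The ``in particular'' statement is immediate since a nonzero polynomial cannot vanish on all of $\mathbb C^n$.

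There is no real obstacle here; the only step that requires any care is the pairing computation, and even that is a one-line multinomial identity. The lemma is essentially a restatement of apolar duality, and it is this duality that converts the linear-algebra problem of spanning $V_{n,d}$ by $d$-th powers of linear forms into the geometric problem of finding no hypersurface of degree $d$ through a configuration $\Sigma$, thereby linking the Dragt--Abell question to Theorems~\ref{thm3}, \ref{thm4} and \ref{thm5}.
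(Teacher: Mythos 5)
Your proof is correct and is essentially the paper's own argument in intrinsic form: the paper expands $({\bf a}\cdot z)^d$ in the monomial basis and identifies a linear functional annihilating $E$ with a degree-$d$ homogeneous polynomial vanishing on $\Sigma$ via the multinomial coefficients, which is exactly the content of your apolar pairing identity $\langle ({\bf a}\cdot z)^d, Q\rangle = d!\,Q({\bf a})$. The apolarity packaging is a clean restatement of the same duality rather than a different route.
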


\begin{proof}
The elements $z^{\alpha}$ with $|\alpha|=d$, is a basis for the vector space $V_{n,d}$.
Expressed in this basis we have that $({\bf a}\cdot z)^d=\sum_{\alpha}b(\alpha){\bf a}^\alpha z^\alpha$
where
the coefficients $b(\alpha)$ are the multi-binomial coefficients
corresponding to $\alpha$.  Assume that $\Lambda$ is a non-zero linear form that anilates
all elements of the form $({\bf a}\cdot z)^m$, and write
\begin{equation*}
\Lambda ({\bf a}\cdot z)^m = \sum_\alpha \lambda_\alpha \cdot b(\alpha) {\bf a}^\alpha \equiv 0.
\end{equation*}
Considering ${\bf a}^\alpha$ as a basis for $V_{n,d}$ we get a $d$-homogenous polynomial that
vanishes on $\Sigma$.  Conversely, any such polynomial gives rise to a linear form on $V_{n,d}$
which vanishes on $E$.
\end{proof}

Now assume that $\Sigma$ is a disjoint union of $M(2n,d)$ involutive subspaces
of $\mathbb P^{2n-1}$ and assume that the restriction $H^0(\mathbb P^{2n-1},\mathcal O(d))\rightarrow H^0(\Sigma,\mathcal O(d))$ is injective.  Then we think of $\Sigma$ as a collection $\Sigma_1,...,\Sigma_{M(2n,d)}$
of Lagrangian subspaces of $\mathbb C^{2n}$, and according the previous lemma the
set $E$ forms a basis for $V_{n,d}$.  For each $\Sigma_j$ pick $L_j\in Aut_{Sp}\mathbb C^{2n}$
such that $(L_j^{-1})^T$ maps  $\Sigma_j$ to $\{z\in\mathbb C^{2n}:z_1= \cdots = z_n=0\}$.
Now the elements $({\bf a}\cdot z)^d=((L_j^{-1})^T{\bf a}\cdot L_jz)^m$, with ${\bf a}\in\Sigma_j$
and $j=1,...,M(2n,d)$, form a basis for $V_{2n,d}$.

\section{Proof of Theorem \ref{thm1}} \label{section:two}

We start by making the following observation.

\begin{lemma}\label{condition}
Consider a polynomial map $P:\mathbb C^{2n}\rightarrow\mathbb C^{2n}$,
$$
P(z)=z + P_d(z) + O(\|z\|^{d+1}),
$$
Then $(P^*\omega-\omega)(z)=O(\|z\|^{d})$
if and only if $d(P_m\lrcorner\omega)=0$, \emph{i.e.}, if and only if $P_d$ is Hamiltonian regarded
as a vector field on $\mathbb C^{2n}$.
\end{lemma}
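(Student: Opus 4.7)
The plan is a direct first-order computation that identifies $d(P_d \lrcorner \omega)$ with the leading symmetric bilinear variation of $P^*\omega - \omega$. First I would differentiate, writing $dP_z = I + dP_d(z) + O(\|z\|^d)$, with the entries of $dP_d$ homogeneous of degree $d-1$. Since $\omega$ has constant coefficients, $(P^*\omega)_z(u,v) = \omega(dP_z u,\, dP_z v)$, and bilinear expansion yields
\begin{equation*}
(P^*\omega - \omega)_z(u,v) = \omega\bigl(dP_d(z) u,\, v\bigr) + \omega\bigl(u,\, dP_d(z) v\bigr) + R(z)(u,v),
\end{equation*}
where $R(z) = O(\|z\|^d)$ absorbs both the quadratic cross term $\omega(dP_d u,\, dP_d v)$ (of order $2(d-1)\geq d$ as soon as $d \geq 2$) and the contributions coming from the $O(\|z\|^d)$ remainder of the Jacobian.

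Next, viewing $P_d$ as a polynomial vector field $X = \sum_i X^i\, \partial/\partial z_i$, I would compute $X \lrcorner \omega$ coordinatewise against $\omega = \sum_{j=1}^n dz_j \wedge dz_{n+j}$ and then take its exterior derivative. A short index manipulation gives the identity
\begin{equation*}
d(X \lrcorner \omega)(u, v) = \omega(dX \cdot u,\, v) + \omega(u,\, dX \cdot v),
\end{equation*}
which matches exactly the degree-$(d-1)$ leading part of $P^*\omega - \omega$ found in the previous step. Consequently $(P^*\omega - \omega)(z) = d(P_d \lrcorner \omega)(z) + O(\|z\|^d)$, and since the coefficients of $d(P_d \lrcorner \omega)$ are homogeneous of degree $d-1$, the condition that the left-hand side vanish to order $d$ is equivalent to $d(P_d \lrcorner \omega) = 0$.

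To close the lemma I would invoke the Poincar\'e lemma on the contractible space $\mathbb C^{2n}$: the polynomial $1$-form $P_d \lrcorner \omega$ is closed if and only if it is exact, so $d(P_d \lrcorner \omega) = 0$ is equivalent to $P_d \lrcorner \omega = dH$ for a polynomial Hamiltonian $H$ of degree $d+1$, which is precisely the assertion that $P_d$ is a Hamiltonian vector field. The computation itself is routine; the only mild obstacle is keeping the sign and index conventions in the interior product consistent so that the two leading bilinear expressions really do match, and observing that the cross term $\omega(dP_d u,\, dP_d v)$ is harmlessly absorbed into the $O(\|z\|^d)$ remainder precisely because $d \geq 2$.
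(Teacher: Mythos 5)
Your proof is correct and follows essentially the same route as the paper: a first-order expansion of $P^*\omega$ whose leading bilinear term is identified with $d(P_d\lrcorner\omega)$, with the quadratic term $\omega(dP_d\,u,dP_d\,v)$ absorbed into the remainder by degree counting. The only cosmetic differences are that you verify the identity $d(P_d\lrcorner\omega)(u,v)=\omega(dP_d\,u,v)+\omega(u,dP_d\,v)$ by a coordinate computation rather than via the standard formula for the differential of a $1$-form, and that you spell out the closed-implies-exact (Poincar\'e lemma) step, which the paper treats as immediate.
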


\begin{proof}
We have
\begin{eqnarray*}
     P^* \omega (u,v) &=& \omega (P_{*}u , P_{*}v)\\
               &=& \omega (u+(P_m)_*u + O(\|z\|^d), v+(P_d)_*v + O(\|z\|^{d}))\\
               &=& \omega (u,v) + \omega (u,(P_d)_*v) + \omega ((P_d)_*u,v)+ O(\|z\|^d)\\
               &=& \omega (u,v) - v[(P_d \lrcorner \omega)(u)] +
                 u[(P_d \lrcorner \omega)(v)] + O(\|z\|^d)\\
               &=& \omega (u,v) + d(P_d \lrcorner \omega)(u,v) + O(\|z\|^d)\\
\end{eqnarray*}
The last equality is the
well-known formula for the differential of a 1-form, and the next to last equality is
an easy computation. The second term is of degree $d-1$ in $z$ and must therefore be
zero, \emph{i.e.}, $d(P_d \lrcorner \omega) = 0$
\end{proof}

We will now prove the key lemma for proving Theorem \ref{thm1}.

\begin{lemma}\label{basis}
The vector space of $d$-homogenous Hamiltonian vector
fields on $\mathbb C^{2n}$ is spanned by vector fields of the form $(J{\bf a}\cdot z)^d\cdot{\bf a}$.
\end{lemma}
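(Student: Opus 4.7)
The plan is to identify $d$-homogeneous Hamiltonian vector fields with $(d+1)$-homogeneous polynomial Hamiltonians, rewrite the proposed spanning set in terms of its Hamiltonians, and then invoke Lemma \ref{fundamental} with $\Sigma = \mathbb{C}^{2n}$.

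First I would recall that every $d$-homogeneous Hamiltonian vector field $X$ on $\mathbb{C}^{2n}$ is of the form $X_H$ for some $(d+1)$-homogeneous polynomial $H$, where $X_H$ is determined by $X_H \lrcorner \omega = dH$. Because $d+1 \geq 1$, the Hamiltonian $H$ is uniquely determined by $X$ (constants, which form the kernel of $H \mapsto X_H$, are excluded from $V_{2n,d+1}$). Hence the linear map $H \mapsto X_H$ gives a vector-space isomorphism $V_{2n,d+1} \xrightarrow{\sim} \{d\text{-homogeneous Hamiltonian vector fields}\}$.

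Next I would carry out the key identification: for a constant vector $\mathbf{a} \in \mathbb{C}^{2n}$, viewed as a constant vector field, a direct expansion of $\omega = \sum_{j=1}^n dz_j \wedge dz_{n+j}$ gives
\begin{equation*}
\mathbf{a} \lrcorner \omega \;=\; \sum_{j=1}^n \mathbf{a}_j\, dz_{n+j} \;-\; \sum_{j=1}^n \mathbf{a}_{n+j}\, dz_j \;=\; (J\mathbf{a}) \cdot dz.
\end{equation*}
Multiplying by the scalar function $(J\mathbf{a}\cdot z)^d$ yields
\begin{equation*}
\bigl((J\mathbf{a}\cdot z)^d \cdot \mathbf{a}\bigr) \lrcorner \omega \;=\; (J\mathbf{a}\cdot z)^d \,(J\mathbf{a})\cdot dz \;=\; \frac{1}{d+1}\, d\bigl[(J\mathbf{a}\cdot z)^{d+1}\bigr],
\end{equation*}
so the vector field $(J\mathbf{a}\cdot z)^d\cdot \mathbf{a}$ is precisely the Hamiltonian vector field of the $(d+1)$-homogeneous polynomial $H_{\mathbf{a}} = \tfrac{1}{d+1}(J\mathbf{a}\cdot z)^{d+1}$.

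Finally, since $J$ is a linear isomorphism of $\mathbb{C}^{2n}$, as $\mathbf{a}$ ranges over $\mathbb{C}^{2n}$ so does $\mathbf{b} := J\mathbf{a}$, and the family $\{H_{\mathbf{a}}\}$ coincides up to constants with $\{(\mathbf{b}\cdot z)^{d+1} : \mathbf{b}\in\mathbb{C}^{2n}\}$. By Lemma \ref{fundamental} applied with $\Sigma = \mathbb{C}^{2n}$ and degree $d+1$, this family spans $V_{2n,d+1}$. Transporting through the isomorphism $H \mapsto X_H$ then shows that $\{(J\mathbf{a}\cdot z)^d\cdot \mathbf{a} : \mathbf{a}\in\mathbb{C}^{2n}\}$ spans the space of $d$-homogeneous Hamiltonian vector fields, as claimed. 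I expect no serious obstacle: the only point requiring care is the sign/ordering convention in verifying the identity $\mathbf{a}\lrcorner \omega = (J\mathbf{a})\cdot dz$, which must be consistent with the paper's definition of $J$.
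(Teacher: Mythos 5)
Your proof is correct and takes essentially the same route as the paper: identify $d$-homogeneous Hamiltonian fields with $(d+1)$-homogeneous potentials and expand the potential via Lemma \ref{fundamental} with $\Sigma=\mathbb{C}^{2n}$, observing that the potential of $(J\mathbf{a}\cdot z)^d\cdot\mathbf{a}$ is a multiple of $(J\mathbf{a}\cdot z)^{d+1}$. The only difference is cosmetic: the paper phrases the identification as $P_d=J\nabla Q$ rather than via $X_H\lrcorner\omega=dH$.
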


\begin{proof}
Let $P_d(z)$ be a hamiltonian vector field.
Then there exists a $(d+1)$-homogenous polynomial $Q(z)$
such that $P_d(z)=J\nabla Q(z)$.   By Lemma \ref{fundamental}
we may write
$$
Q(z)=\sum_{j=1}^N c_j\cdot ({\bf a}^j\cdot z)^{m+1}
$$
from which the result follows.
\end{proof}

\medskip

\emph{Proof of Theorem \ref{thm1}:}
We are given a symplectic $d$-jet $P(z)$.   We will prove the result by
induction on $k$ for $2\leq k\leq d$. Since $DP(0)\in Aut_{Sp}\mathbb C^{2n}$
we may match $P$ to order one.

Assume next that we have found $F_k\in Aut_{Sp}\mathbb C^{2n}$
with $(F_k-P)(z)=O(\|z\|^{k})$ for $2\leq k<d$.  Write
$$
G_k(z):=P\circ F_k^{-1}(z)=z + P_{k+1}(z) + O(\|z\|^{k+2}).
$$
Since $G_k$ is a symplectic $d$-jet and $d \ge k+1$, it follows by Lemma \ref{condition}
that $P_{k+1}$ is Hamiltonian, and by Lemma \ref{basis} we may write
$$
P_{k+1}(z)=\sum_{j=1}^N c_j\cdot(J{\bf a}^j\cdot z)^{m}\cdot{\bf a}^j.
$$
Define $S_j(z):=z + c_j\cdot (J{\bf a}^j\cdot z)^m\cdot{\bf a}^j$.
Then $S_j\in Aut_{Sp}\mathbb C^n$ for all $j$, and the composition $H_{k+1}:=S_N\circ\cdot\cdot\cdot\circ S_1$
matches $G_k$ to order $k+1$.   Then $F_{k+1}:=H_{k+1}\circ F_k$
matches $P$ to order $k+1$.
$\hfill\square$

\section{Proofs of Theorems 2, 3 and 4 using computer algebra} \label{section:three}

Let $P(z)=z+P_d(z)$ be a homogenous symplectic $d$-jet at the origin.
The algorithm in the proof of Theorem \ref{thm1} is likely to
produce a symplectic completion $F$ whose degree is too large to be useful in practice. More specifically, the degree of
$F$ will depend on the choice of basis $\{(J{\bf a}^j \cdot z)^{d+1}\}_{j=1}^{N(2n,d+1)}$.
To illustrate this, consider two vectors ${\bf a}$ and ${\bf b}$, and the composition
of two shears:
\begin{equation}\label{composition}
S_b\circ S_a(z)= z + (J{\bf a} \cdot z)^{d}\cdot{\bf a} + (J{\bf b} \cdot (z + (J{\bf a} \cdot z)^{d}\cdot{\bf a}))^{d}\cdot{\bf b}
\end{equation}
Composing more maps, we see that we run the risk of producing a degree $d^{N(2n,d+1)}$ polynomial map while
trying to match a $d$-jet. However, inspecting (\ref{composition}) we note that if
$J{\bf b} \cdot {\bf a}=0$, \emph{i.e.}, if ${\bf a}$ and ${\bf b}$ lie in
a \emph{Lagrangian} subspace of $\mathbb C^{2n}$, then the degree of
$S_2\circ S_1$ is $d$ (recall that an $n$-dimensional subspace $\Lambda\subset\mathbb C^{2n}$
is Lagrangian if $\omega|_{\Lambda}=0$, or, equivalently, if $J{\bf a}\cdot {\bf b}=0$ for all ${\bf a},{\bf b}\in\Lambda$).  Hence, to keep the growth of degree down when composing
our shear maps, we should choose the maximal number of vectors ${\bf a}^j$ from the same Lagrangian subspaces.   For any given Lagrangian $\Lambda\subset\mathbb C^{2n}$, it follows from Lemma \ref{fundamental} that we may find vectors ${\bf a}^j \in\Lambda$, $j=1,...,N(n,d+1)$, such that the vectors $({\bf a}^j \cdot z)^{d+1}$ are linearly independent.
It follows that we need at least $M(2n,d+1)=\lceil N(2n,d+1)/N(n,d+1)\rceil$ Lagrangian
subspaces. Note that $M(2n, d+1)$ is exactly the number appearing in the Conjecture of Dragt and Abell.

For simplicity of notation we will from now on write $d$ instead of $d+1$. This will not create confusion as we will no longer need to consider the jet $P$.

\begin{definition}
If the $d$-homogenous polynomials $\{({\bf a}\cdot z)^{d}\}_{{\bf a}\in\Lambda_j,1\leq j\leq M(2n,d)}$
span the vector space of all $d$-homogenous polynomials then we say that the Lagrangian subspaces $(\Lambda_j)$ \emph{span degree $d$}.
\end{definition}

We first discuss our approach to proving  Theorem \ref{thm3} for general dimension $n$, and later restrict to $2n=6$.
Choose $M(2n,d)$ Lagrangian subspaces $\Lambda_j\subset\mathbb C^{2n}$
at random, and for each $j$ choose vectors ${\bf a}^{k,j} \in \Lambda_j$, for $1\leq k\leq N(n,d)$.
Expand the $d$-homogenous polynomials $({\bf a}^{k,j}\cdot z)^d$ in
a convenient basis, and call the corresponding vectors $v_{k,j}$.   We form
the matrix whose rows are the vectors $v_{k,j}$ and then compute the rank.
If the rank turns out to be $N(2n,d)$ we have proved Theorem \ref{thm3} for degree $d$.
If however the rank turns out to be less than $N(2n,d)$ we cannot draw conclusions;
it might merely be caused by a bad choice
of Lagrangian spaces and vectors.

If two Lagrangian subspaces $\Lambda_1$ and $\Lambda_2$ intersect non-trivially then it is clear that the matrix formed by the vectors $({\bf a}^{k,j} \cdot z)^d$ cannot have maximal rank. Therefore we should consider collections of Lagrangian subspaces $\Lambda_1, \ldots, \Lambda_k$ that satisfy
\begin{equation*}
\Lambda_i \cap \Lambda_j = \{0\},
\end{equation*}
for all $i \neq j$. It turns out that there exists a convenient form for such collections.

\begin{lemma}\label{normalform}
Let $\Lambda_1, \ldots, \Lambda_k$ be Lagrangian subspaces of $\mathbf{C}^{2n}$ with
the property that
\begin{equation*}
\Lambda_i \cap \Lambda_j = \{0\},
\end{equation*}
for any $ i \neq j$. Then, after a suitable linear symplectic change of coordinates
the Lagrangian subspaces $(\Lambda_j)$ are spanned by the matrices
\begin{equation*}
L_1 = [I, 0], L_2 = [0, I],
\end{equation*}
and
\begin{equation*}
L_j = [I, A_j]
\end{equation*}
for $j = 3, \ldots k$. Moreover, the $n\times n$ matrices $A_j$ are symmetric, have
non-zero determinant, and the same holds true for the matrices $(A_i - A_j)$ for distinct $i, j \ge 3$.
\end{lemma}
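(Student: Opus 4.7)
\medskip

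\noindent \textbf{Proof proposal.} The plan is to exploit the transitivity of the linear symplectic group on ordered pairs of transverse Lagrangian subspaces to put $\Lambda_1,\Lambda_2$ in the stated standard form, and then to read off the algebraic properties of the matrices $A_j$, $j\geq 3$, directly from the Lagrangian condition together with pairwise transversality.

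First, I would invoke the standard fact from symplectic linear algebra that, given any ordered pair $(\Lambda_1,\Lambda_2)$ of transverse Lagrangian subspaces of $(\mathbb{C}^{2n},\omega)$, there exists a symplectic basis $e_1,\ldots,e_n,f_1,\ldots,f_n$ with $\omega(e_i,f_j)=\delta_{ij}$ and $\omega(e_i,e_j)=\omega(f_i,f_j)=0$, such that $\Lambda_1=\Span(e_1,\ldots,e_n)$ and $\Lambda_2=\Span(f_1,\ldots,f_n)$. Indeed, since $\omega$ restricts to a non-degenerate pairing on $\Lambda_1\times\Lambda_2$ (transversality together with the Lagrangian property of each factor), any chosen basis of $\Lambda_1$ admits a unique $\omega$-dual basis in $\Lambda_2$. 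Applying the corresponding linear symplectomorphism reduces us to $\Lambda_1=\{(x,0):x\in\mathbb{C}^n\}$ and $\Lambda_2=\{(0,y):y\in\mathbb{C}^n\}$, i.e.\ $L_1=[I,0]$ and $L_2=[0,I]$. Then for each $j\geq 3$ the hypothesis $\Lambda_j\cap\Lambda_2=\{0\}$ says that the projection $(x,y)\mapsto x$ restricts to an isomorphism $\Lambda_j\to\mathbb{C}^n$, so $\Lambda_j$ is the graph of a unique linear map $A_j$ and we obtain the presentation $L_j=[I,A_j]$.

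It then remains to verify the three algebraic properties of the $A_j$. The symmetry $A_j=A_j^{T}$ comes from the Lagrangian condition through the direct computation
\[
\omega\bigl((x,A_jx),(y,A_jy)\bigr)=x^{T}A_jy-(A_jx)^{T}y=x^{T}(A_j-A_j^{T})y,
\]
which must vanish for all $x,y\in\mathbb{C}^n$. The non-degeneracy assertions are then immediate consequences of pairwise transversality: a nonzero $x$ with $A_jx=0$ produces a nonzero element $(x,0)\in\Lambda_j\cap\Lambda_1$, so $\det A_j\neq 0$; and for distinct $i,j\geq 3$, a nonzero $x$ with $(A_i-A_j)x=0$ produces a nonzero element $(x,A_ix)=(x,A_jx)\in\Lambda_i\cap\Lambda_j$, so $\det(A_i-A_j)\neq 0$. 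No single step is a real obstacle; the most delicate part is arguably the initial choice of adapted symplectic basis, but this reduces to the observation that $\omega$ restricts to a perfect pairing between any two transverse Lagrangians, after which every remaining assertion is a routine matrix reformulation.
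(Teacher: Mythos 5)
Your proof is correct and follows essentially the same route as the paper: normalize $\Lambda_1,\Lambda_2$ to $[I,0],[0,I]$ by a linear symplectomorphism, write each remaining $\Lambda_j$ as a graph $[I,A_j]$ using transversality to $\Lambda_2$, and deduce symmetry from the Lagrangian condition and invertibility of $A_j$ and $A_i-A_j$ from the pairwise trivial intersections. The only (cosmetic) difference is in the normalization of the first pair: you invoke the perfect pairing $\omega|_{\Lambda_1\times\Lambda_2}$ to build a symplectic basis at once, whereas the paper first sends $\Lambda_1$ to $[I,0]$ and then applies the explicit symplectic shear $(z,w)\mapsto(z-AB^{-1}w,w)$ to move $\Lambda_2$ to $[0,I]$.
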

\begin{proof}
First note that we can change coordinates so that $\Lambda_1$ is induced by $[I, 0]$.
Now suppose that $\Lambda_2$ is induced by the matrix $[A, B]$. Using the fact that
the intersection of $\Lambda_1$ and $\Lambda_2$ is $\{0\}$ we get that $\det B \neq 0$.
Hence after changing coordinates with the linear symplectomorphism given by
\begin{equation*}
(z, w) \rightarrow (z - AB^{-1}w,w)
\end{equation*}
the subspace $\Lambda_2$ will be given by the matrix $[0, B]$, or equivalently
$[0, I]$, while $\Lambda_1$ is still given by $[I,0]$.

Using that $\Lambda_j \cap \Lambda_2 = \{0\}$ for $ j \ge 3$ immediately gives that
$\Lambda_j$ is spanned by a matrix of the form $[I, A_j]$. The fact that the subspace
is Lagrangian forces the matrix $A_j$ to be symmetric. The determinant of $A_j$ must be
non-zero or else $\Lambda_j$ intersects $\Lambda_1$ in at least a line. Similarly
$\det (A_j - A_i)  \neq 0$ or else $\Lambda_j$ and $\Lambda_i$ intersect in at least a line.
\end{proof}

Working with Lagrangian subspaces that are generated by matrices of this form has several advantages. One is that, at least for small degree $d$ and dimension $2n$, we can explicitly describe by computer for which matrices $(A_j)$ the Lagrangian subspaces can generate a matrix $T$ of maximal rank. When $2n = 4$ and $d = 3$ the matrix $T$ is a $20 \times 20$ square matrix. A symbolic computation of its determinant in Mathematica gives the following.

\begin{lemma}
Consider five Lagrangian subspaces $(\Lambda_j)$ in $\mathbb{C}^4$ generated by $[I, 0]$, $[0, I]$, $[I, A]$, $[I, B]$ and $[I, C]$, where $A, B, C$ are symmetric $2 \times 2$ matrices that satisfy the conditions in Lemma \ref{normalform}. Let us write $A = (a_{kl})$, $B = (b_{kl})$, and $C = (c_{kl})$. Then $\Lambda_1 , \ldots , \Lambda_5$ span degree $3$ if and only if
\begin{equation*}
\det \left[ \begin{matrix}
a_{11} & b_{11} & c_{11}\\
a_{12} & b_{12} & c_{12}\\
a_{22} & b_{22} & c_{22}
\end{matrix} \right] \neq 0.
\end{equation*}
\end{lemma}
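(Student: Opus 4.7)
\emph{Plan of proof.} The claim is a statement about the rank of the $20\times 20$ matrix $T$ whose rows are the coefficient vectors, in the monomial basis of $V_{4,3}$, of the polynomials $(\mathbf{a}\cdot z)^3$ as $\mathbf{a}$ runs through a chosen basis of each Lagrangian $\Lambda_j$. The plan is to verify the equivalence by a direct symbolic determinant computation, following the general strategy outlined above.

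The first step is to exploit the block structure coming from $\Lambda_1$ and $\Lambda_2$. The polynomials $(u_1z_1+u_2z_2)^3$ arising from $\Lambda_1=\mathrm{span}[I,0]$ span the four-dimensional subspace of degree-$3$ polynomials in $z_1,z_2$, and likewise $\Lambda_2$ contributes all degree-$3$ polynomials in $z_3,z_4$. Together these fill the eight-dimensional ``pure'' subspace $W\subset V_{4,3}$. Hence $T$ has maximal rank if and only if the twelve polynomials produced from $\Lambda_3,\Lambda_4,\Lambda_5$ descend to a basis of the twelve-dimensional quotient $V_{4,3}/W$ of mixed monomials. For $\Lambda_j=\mathrm{span}[I,A_j]$ the contribution is
\[
(w_1^{(j)})^i(w_2^{(j)})^{3-i},\qquad i=0,1,2,3,
\]
where $w_1^{(j)}=z_1+a_{11}^{(j)}z_3+a_{12}^{(j)}z_4$ and $w_2^{(j)}=z_2+a_{12}^{(j)}z_3+a_{22}^{(j)}z_4$.

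The second step is to assemble the reduced $12\times 12$ matrix $\widetilde T$ whose rows are the images of these twelve polynomials in $V_{4,3}/W$, expressed in the basis of mixed monomials, and to compute $\det \widetilde T$ symbolically in Mathematica. What one expects, and must confirm, is that \texttt{Factor} returns a product of a nonzero scalar, some ``trivial'' factors which vanish only when a pair of Lagrangians fails to be transverse (expressions like $\det A_j$ and $\det(A_i-A_j)$, all nonzero under the hypotheses of Lemma \ref{normalform}), and a single remaining nontrivial factor equal, up to scalar, to
\[
D=\det\begin{pmatrix} a_{11}&b_{11}&c_{11}\\a_{12}&b_{12}&c_{12}\\a_{22}&b_{22}&c_{22}\end{pmatrix},
\]
writing $A=A_3$, $B=A_4$, $C=A_5$. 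The main obstacle is precisely to verify this clean factorization: identifying each ``trivial'' factor with a non-transversality locus and checking that no further nontrivial factor appears beyond $D$. Since $\det\widetilde T$ is a polynomial of moderate total degree in only nine variables, \texttt{Factor} terminates quickly and the verification reduces to inspecting its output, completing the proof.
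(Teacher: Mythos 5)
Your proposal is correct and takes essentially the same approach as the paper: the lemma is proved there by a symbolic Mathematica computation of the determinant of the $20\times 20$ matrix $T$, factored and interpreted against the non-transversality loci $\det A_j=0$, $\det(A_i-A_j)=0$. Your reduction to the $12\times 12$ matrix on the quotient by the pure monomials coming from $\Lambda_1$ and $\Lambda_2$ is just a harmless (slightly more efficient) reorganization of that same computation.
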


We were unable to find similar formulas for larger degrees. We shall see below that there is no collection of $M(4,4)$ Lagrangian subspaces that spans degree $4$. For degrees $5$ and larger our computer was unable to run the symbolic computation.

\subsection{Proof of Theorem \ref{thm2}}

An advantage of Lagrangian subspaces of the form described in Lemma \ref{normalform} is that it allows us to easily work with integer coefficients. If there exists a collection $(\Lambda_j)$ that spans degree $d$, then degree $d$ is spanned for collections $(\Lambda_j)$ in a Zariski open subset. Hence maximal rank will also be attained for matrices $(A_j)$ with integer coefficients. Working with integer coefficients will turn out to be very helpful.

Let $max \in \mathbb{N}$. Using a computer we randomly generate symmetric matrices $(A_j)$ for $j = 1, \ldots , M(2n, d)$, where each upper diagonal entry is chosen independently from the interval $[1, max] \subset \mathbb{N}$. In order to choose the vectors ${\bf a}^{j, k}$ from the Lagrangian subspace $\Lambda_j$, we randomly choose an $n \times N(n, d)$ matrix $X$, again with each entry chosen independently from $[1,max]$, and define
\begin{equation*}
\left[\begin{matrix}
{\bf a}^{j,1}\\
\vdots\\
{\bf a}^{j,M}
\end{matrix}
\right] = X \cdot \left[I,  A_j\right].
\end{equation*}
We write each homogeneous polynomial $({\bf a}^{j,k} \cdot z)^d$ as a vector with respect to the same monomial basis to obtain the matrix
\begin{equation}\label{matrixT}
T = T(\{{\bf a}^{j,k}\} = \left[
\begin{matrix}
\vdots\\
({\bf a}^{j,k} \cdot z)^d\\
\vdots
\end{matrix}
\right],
\end{equation}
where $T$ again has integer coefficients. Hence, at least in theory, a computer is able to compute the rank of $T$. If this rank is maximal then we have found a collection $(\Lambda_j)$ that spans degree $d$.

We were able to run this program in Mathematica for dimension $2n = 6$ and $d \le 12$. It turns out that as the degree $d$ grows, not only does the size of $T$ grow, but also the necessary interval $[1, max]$. Hence the coefficients occurring in $T$ grow rapidly, and the program quickly becomes too large to run even on the strongest computer.

We were able to deal with larger degrees by computing the rank of $T$ modulo suitably chosen primes. Again if this rank is maximal then we know for sure that the collection $(\Lambda_j)$ spans degree $d$. This allowed us to find a collection $(\Lambda_j)$ that spans degrees up to $12$, without having to run our program on a special computer. It is likely that by running the program on a supercomputer, spanning collections can be found for slightly larger degrees.

For completeness, we print the Mathematica code used to prove Theorem $3$ for degree $d=12$. The code is similar for lower degrees.

\vspace{.1in}

\noindent\(\pmb{\text{makeL}[\text{max$\_$}] \text{:=} \text{Module}[\{\text{matrix}\},}\\
\pmb{\text{matrix} = \text{Table}[\text{RandomInteger}[\{1,\max \}], \{j, 1, 3\}, \{i, 1, 3\}];}\\
\pmb{\text{matrix}[[1, 2]] = \text{matrix}[[2,1]];}\\
\pmb{\text{matrix}[[1,3]] = \text{matrix}[[3,1]];}\\
\pmb{\text{matrix}[[2,3]] = \text{matrix}[[3,2]];}\\
\pmb{\text{Transpose}[\text{Flatten}[\{\text{IdentityMatrix}[3], \text{matrix}\}, 1]]}\\
\pmb{]}\)

\noindent\(\pmb{\text{tensor}[\{\text{a$\_$}, \text{b$\_$}, \text{c$\_$}, \text{d$\_$}, \text{e$\_$}, \text{f$\_$}\}]\text{:=} }\\
\pmb{\text{Evaluate}[\text{List}\text{@@}\text{Expand}[(a+b+c+d+e+f){}^{\wedge}12]]}\)

\noindent\(\pmb{\text{makematrix}[\text{max$\_$}] \text{:=} \text{Module}[\{\text{matrix2}, \text{matrix3}\},}\\
\pmb{\text{matrix2} =\text{  }\text{Table}[\text{RandomInteger}[\{1,\max \}], \{j, 1,91\}, \{i, 1, 3\}];}\\
\pmb{\text{matrix3} = \text{Flatten}[\text{Table}[\text{matrix2} . \text{makeL}[\max ], \{i, 1,68\}], 1];}\\
\pmb{\text{Thread}[\text{tensor}[\text{matrix3}]]}\\
\pmb{]}\)

\noindent\(\pmb{T = \text{makematrix}[20000];}\)

\noindent\(\pmb{\text{Do}[\text{Print}[\text{MatrixRank}[T,\text{Modulus} \to  \text{Prime}[i]]], \{i, 100, 150\}]}\)

\vspace{.1in}

After running the program for one night the ranks of the first $26$ were computed:
$$
\begin{aligned}
& 6188, 6181, 6180, 6187, 6186, 6183, 6188, 6187, 6186, 6184, 6187, 6185, 6184, 6185, 6185,\\
& 6184, 6186, 6186, 6187, 6187, 6184, 6185, 6184, 6183, 6186, 6185
\end{aligned}
$$
Notice that the required rank 6188 was obtained twice.

\subsection{Proof of Theorem 4}\label{section:four}

While we will give an explicit proof of Theorem \ref{thm4} in the next section, we will now outline how the same result can be obtained using computer algebra. As explained above, Theorem \ref{thm4} follows if it can be shown that the determinant of the matrix $T$ is zero for all choices of the matrices $A_3$ through $A_7$. One easily sees that the determinant of $T$ is a polynomial of degree $50$ in the $15$ independent entries of the symmetric $2\times 2$ matrices $A_3, \ldots , A_7$. Hence to conclude that $P = 0$ it is sufficient to evaluate $P$ on a grid with $51^{15}$ entries. Unfortunately this is far beyond the scope of current-day computers.

The size of the grid can be significantly reduced using the following observations. Most importantly, after dividing by
$$
Q = \prod_{j = 3, \ldots 7} \mathrm{det}(A_j) \prod_{i < j} \mathrm{det}(A_j - A_i),
$$
which is a product of $15$ distinct prime factors, the remaining polynomial $P/Q$ has only degree either $4$ or $5$ in each of the variables separately. Secondly $P$ and $Q$ are invariant under an action of the group $S_2 \times S_5$. Using these observations we constructed an easily programmable grid (on which $Q \neq 0$) with only $489,742,800$ entries, and evaluated $P$ on this grid using a computer. As expected, $P$ turned out to vanish identically, proving Theorem \ref{thm4}.

\section{Seven Involutive Lines}\label{section:five}

It is well-known that every line in $\mathbb P^3$ is contained in a one-parameter family of hyperplanes; that $3$ pairwise disjoint lines in $\mathbb P^3$  determine a unique quadric containing them;
and that $5$ pairwise disjoint lines in $\mathbb P^3$  are contained in a cubic surface if and only if there exists another line intersecting all of them. Less well-known is the following
result of Cayley: given six lines $\ell_1, \ldots, \ell_6$
in general position in $\mathbb P^3$  then there exists a quartic line complex $\mathfrak Q$ (depending on the six lines)  such that the union of $\ell_1, \ldots, \ell_6$ and a seventh line $\ell_7$ is contained in a quartic surface if and only $\ell_7$ belongs to $\mathfrak Q$, see \cite[\S 107]{Ca}. Motivated by Cayley's result, the problem of giving conditions on seven lines such that they are contained in a quartic surface received considerable attention in the late 1920s and early 1930s.   Perhaps the deepest results on the subject are due to Todd \cite{Todd}. He proves that  there exists a quartic surface containing seven pairwise disjoint lines if and only if there exists a rational curve of degree $19$ meeting six of the lines in twelve points and the seventh in ten points. Moreover,  if the seven lines belong to a linear line complex (in particular if the seven lines are all involutive) then he proves that there is always a quartic surface containing them. We could not follow in detail the arguments of Todd, but we present a simple proof of his result below.
For our purposes it is convenient to state it as follows.

\begin{theorem}[Todd]\label{Todd}
Let $\Lambda=\{\Lambda_1,...,\Lambda_7\}\subset\mathbb C^4$ be a set of seven Lagrangian planes.
Then $\Lambda$ is contained in a quartic.
\end{theorem}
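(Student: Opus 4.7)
The plan is to exhibit the sought quartic as the tangency locus of two foliations by curves on $\mathbb{P}^3$ that are tangent to the contact structure $\xi$ induced by $\omega$ and share $\Lambda_1, \ldots, \Lambda_7$ among their leaves. Recall that $\xi = \ker \alpha$ with $\alpha = \iota_R \omega$ a global section of $\Omega^1_{\mathbb{P}^3}(2)$, so the short exact sequence
$$
0 \to \xi \to T\mathbb{P}^3 \to \mathcal{O}_{\mathbb{P}^3}(2) \to 0
$$
gives $\det \xi \cong \mathcal{O}(2)$; a degree-$2$ foliation tangent to $\xi$ corresponds to a nonzero section of $\xi(1)$.

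First I would show $h^0(\xi(1)) = 16$: quadratic vector fields on $\mathbb{C}^4$ form a $40$-dimensional space, linear multiples of the Euler field $R$ contribute $4$ dimensions, and the Legendrian relation $\omega(R,X) = 0$ (a cubic polynomial in four variables, surjective onto all cubics by a Koszul computation) imposes $20$ further conditions. Next, each involutive line imposes at most $2$ linear conditions: after a symplectic change sending $\Lambda_i$ to $\{z_3 = z_4 = 0\}$, invariance reads $X_3|_{\Lambda_i} = X_4|_{\Lambda_i} = 0$, but $\omega(R, X) = 0$ restricted to $\Lambda_i$ forces the pair $(X_3|_{\Lambda_i}, X_4|_{\Lambda_i})$ into the $2$-dimensional family $\{(z_2 c, -z_1 c) : c \text{ linear in } z_1, z_2\}$, so vanishing is codimension $2$. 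Consequently the subspace $W \subset H^0(\xi(1))$ of Legendrian degree-$2$ foliations having all seven $\Lambda_i$ as leaves satisfies $\dim W \geq 16 - 14 = 2$.

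To finish, pick two linearly independent $v_1, v_2 \in W$. Generically these represent distinct foliations, so $v_1 \wedge v_2 \in H^0(\Lambda^2 \xi(2)) = H^0(\mathcal{O}(4))$ is a nonzero quartic section, and its zero locus contains every $\Lambda_i$ because $v_1(p)$ and $v_2(p)$ are both tangent to $\Lambda_i$ at each $p \in \Lambda_i$ and hence proportional. The main obstacle I foresee is the degenerate case in which every pair in $W$ factors through a common line subbundle of $\xi(1)$, so that $v_1 \wedge v_2 \equiv 0$; geometrically this would mean the seven lines are simultaneously leaves of a single degree-$1$ Legendrian foliation. I would dispose of this by establishing the result for generic configurations and then appealing to upper semicontinuity of $\dim \ker\bigl(H^0(\mathcal{O}(4)) \to \bigoplus_{i=1}^7 H^0(\Lambda_i, \mathcal{O}(4))\bigr)$ in families of seven-involutive-line configurations.
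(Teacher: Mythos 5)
Your construction — the contact subbundle $\xi$ with $\det\xi\cong\mathcal O(2)$, the bound $h^0(\xi(1))\ge 16$, the count of at most two conditions per involutive line, and the quartic $v_1\wedge v_2\in H^0(\mathcal O_{\mathbb P^3}(4))$ — is exactly the paper's argument up to the last step, and those parts are fine (your coordinate computation showing that invariance of a line is codimension $\le 2$ is a legitimate substitute for the paper's identification of the normal sheaf $\mathcal C|_\ell/T\ell$ with $\mathcal O_\ell$).

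The genuine gap is in how you dispose of the case $v_1\wedge v_2\equiv 0$. This is not a side issue: it is precisely where the paper does its remaining work, and your proposal does not actually address it. You assert ``generically these represent distinct foliations'' and then say you would handle the degenerate case ``by establishing the result for generic configurations,'' but that is circular as written: to establish the generic case you must first show that for a \emph{generic} configuration of seven disjoint involutive lines the space $W$ is not contained in the sections defining a single foliation, and no argument for this is given. The semicontinuity reduction itself is sound (the configuration space of $7$-tuples of involutive lines is irreducible and the kernel dimension of $H^0(\mathbb P^3,\mathcal O(4))\to H^0(\Sigma,\mathcal O(4))$ is upper semicontinuous; the paper uses the same device to pass to arbitrary linear line complexes), but it only transfers the statement from generic to special configurations — it does not produce the generic nondegeneracy. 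To close the gap you would need either a dimension count — if $v_1\wedge v_2\equiv 0$ then, after dividing out the divisorial zero locus, one gets a nonzero section of $\mathcal C(-1)$ (impossible, since $H^0(\mathbb P^3,\mathcal C(-1))=0$) or of $\mathcal C$, i.e.\ a linear contact vector field, an element of a fixed finite-dimensional family each member of which leaves invariant only a small family of lines, so that the locus of bad $7$-tuples has dimension strictly less than $21$ — or the paper's direct argument, which rules out $v_1\wedge v_2\equiv 0$ for the given configuration by showing that a traceless linear field preserving six pairwise disjoint generic $2$-planes of $\mathbb C^4$ must vanish. Note also a small inaccuracy in your description of the degeneration: the divisorial zero locus of $v_1$ may be a plane containing one of the seven lines, so only \emph{six} of the lines need be leaves of the reduced degree-$\le 1$ Legendrian foliation; this matters if you carry out the dimension count.
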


Notice that Theorem \ref{Todd} implies that seven lines in an arbitrary line complex are
contained in a quartic. Indeed, up to projective automorphisms there are only two linear line complexes: the
one defined by the projectivazation of Lagrangian planes, and a degeneration of it consisting of all lines
which intersect another given line. The result for lines in an arbitrary linear line complex
follows from the semi-continuity of the dimension of the linear system of quartics containing
seven lines when the lines vary.

\subsection{Proof of Theorem \ref{Todd}}

Given seven involutive lines in $\mathbb P^3$ we  seek   a quartic
containing them.  We will do this by finding a non-trivial section $s\in H^0(\mathbb P^3,\mathcal O_{\mathbb P^3}(4))$ such that the zero locus of $s$ contains them.

Let $X=\sum_{j=1}^4z_j\frac{\partial}{\partial z_j}$ be the Euler (radial) vector field.
Then $\ell\subset\mathbb P^3$ is involutive if and only
the $1$-form  $\sigma:=\omega \lrcorner X$ vanishes identically when pulled-back to $\pi^* \ell$.
If we twist the dual of the  Euler exact sequence  (\cite[page 409]{GH})
\[
0 \to \Omega^1_{\mathbb P^3} \to \bigoplus_{i=0}^3 \mathcal O_{\mathbb P^3}(-1) \to \mathcal O_{\mathbb P^3} \to 0 ,
\]
by $\mathcal O_{\mathbb P^3}(2)$ we see that
$\sigma$ can be interpreted as a section $\sigma\in\Gamma(\mathbb P^3,\Omega^1_{\mathbb P^3}(2))$.
Thus a line $\ell$ is involutive if and only if it is tangent to the subbundle $\mathcal C:=Ker(\sigma)\subset T\mathbb P^3$, which is called a contact distribution. Considering the exact sequence
\begin{equation}\label{E:contact}
0 \to \mathcal C \to T\mathbb P^3 \to \mathcal O_{\mathbb P^3}(2) \to 0
\end{equation}
we obtain
\begin{equation}\label{E:detbundle}
det(\mathcal C)\simeq \det(T\mathbb P^3) \otimes \mathcal O_{\mathbb P^3}(-2) \simeq \mathcal O_{\mathbb P^3}(2).
\end{equation}

We want a lower estimate for the dimension of $H^0(\mathbb P^3,\mathcal C(1))$, the space of vector fields with coefficients in $\mathcal O_{\mathbb P^3}(1)$ which are tangent to the distribution. For this we consider the
exact sequence (\ref{E:contact}) twisted by $\mathcal O_{\mathbb P^3}(1)$.
Clearly we have the inequality
$$
\dim H^0(\mathbb P^3,\mathcal C(1) ) \ge \dim H^0(\mathbb P^3,T\mathbb P^3(1)) -  \dim H^0(\mathbb P^3,\mathcal O_{\mathbb P^3}(3)).
$$
The vector space $H^0(\mathbb P^3, \mathcal O_{\mathbb P^3}(3))$ is nothing but the vector space of cubic homogeneous polynomials in $4$ variables, and therefore has dimension
$20$. To determine the dimension $H^0(\mathbb P^3,T\mathbb P^3(1))$, first twist the Euler exact sequence \cite[page 409]{GH} to obtain
\[
0 \to \mathcal O_{\mathbb P^3}(1) \to \bigoplus_{i=0}^3 \mathcal O_{\mathbb P^3}(2) \to T \mathbb P^3(1) \to 0 ,
\]
and then look at the long exact sequence in cohomology. Since $h^1(\mathbb P^3, \mathcal O_{\mathbb P^3}(1))=0$ by \cite[Theorem 5.1 of Chapter III]{Hartshorne},
we have that
\[
\dim H^0(\mathbb P^3, T \mathbb P^3(1)) = 4 \dim H^0(\mathbb P^3, \mathcal O_{\mathbb P^3}(2)) - \dim H^0(\mathbb P^3, \mathcal O_{\mathbb P^3}(1)) = 40 - 4 = 36 \, .
\]
We conclude that
$\dim H^0(\mathbb P^3,\mathcal C(1))\geq 16$.

We now want to show that at least
two linearly independent sections of $H^0(\mathbb P^3,\mathcal C(1))$ leave
the seven lines invariant.   For this we note that   $v \in H^0(\mathbb P^3,\mathcal C(1))$  leaves a line $\ell$ invariant
if and only if it is in the kernel of the map
\begin{equation}\label{linearmap}
H^0(\ell,\mathcal C|_\ell(1))\rightarrow H^0(\ell,\mathcal N(1)),
\end{equation}
where $\mathcal N= \mathcal C|_\ell / T\ell$. Since both $T\ell$ and $\det \mathcal C|_\ell$ are isomorphic to $\mathcal O_{\ell}(2)$
 we see that $\mathcal N$ is the trivial line bundle on $\ell$, i.e., $\mathcal N=\mathcal O_{\ell}$.
 So (\ref{linearmap}) is the same as
\[
H^0(\ell,\mathcal C|_\ell(1))\rightarrow H^0(\ell,\mathcal O_\ell(1)).
\]
Since $\mathrm{dim} H^0(\ell,\mathcal O_\ell(1))=2$ it follows that the codimension
of the space of vector fields leaving $\ell$ invariant is at most $2$; hence
the vector space leaving the total of seven lines invariant has
dimension greater than or equal to two.   So let $v_1, v_2\in H^0(\mathbb P^3,\mathcal C(1))$
be linearly independent leaving the seven lines invariant.
If $v_1\wedge v_2$ is
not identically zero then $v_1\wedge v_2\in H^0(\mathbb P^3,\mathcal O_{\mathbb P^3}(4))$ is the sought quartic containing
the seven lines.

Aiming at contradiction let us assume that $v_1 \wedge v_2 = 0$. Thus
$v_1 = f v_2$ for some non-constant rational function $f \in \mathbb C(\mathbb P^3)$. It follows that
$v_1$  vanishes on the hypersurface $Z=\{ f=0\}$ while $v_2$ vanishes on the hypersurface $P=\{f= \infty\}$. After dividing $v_1$
by the equation of $Z$ we   get a section $w \in H^0(\mathbb P^3,\mathcal C(-a))$ for  $a = \deg Z - 1$. Since $H^0(\mathbb P^3, T\mathbb P^3(k))=0$ is zero
for $k < -1$ (look at the Euler sequence) we conclude that $\deg Z \in \{ 1, 2\}$.

If $\deg Z=2$ then we obtain a non-zero section $w \in H^0(\mathbb P^3 , \mathcal C(-1))$. But this leads to contradiction since
$H^0(\mathbb P^3 , \mathcal C(-1))=0$ because the map induced by (\ref{E:contact})
\begin{align*}
H^0(\mathbb P^3,T\mathbb P^3(-1)) &\longrightarrow H^0(\mathbb P^3,\mathcal O_{\mathbb P^3}(1)), \\
    \sum_{j=1}^{4} \lambda_j \frac{\partial}{\partial z_j} &\mapsto   \lambda_2 z_1 - \lambda_1 z_2 + \lambda_4 z_3 - \lambda_3 z_4
\end{align*}
is clearly an isomorphism.

If $\deg Z=1$ then $Z$ is a hyperplane containing at most one of the involutive lines (the lines
are disjoint) and $w \in H^0(\mathbb P^3,\mathcal C)$ is a global holomorphic vector field tangent to the contact distribution leaving invariant six distinct lines.
The global holomorphic vector fields  on $\mathbb P^3$ are in one to one correspondence with the elements of $\mathfrak{sl}(4,\mathbb C)$. To every $4\times 4$ matrix $A=(a_{ij})$ of trace zero
we associate the vector field $w_A = \sum_{i, j=1}^4 a_{ij} z_i \frac{\partial }{\partial z_j}$. Under this identification, the singular points of $w_A$
correspond to the eigenvectors of $A$, and the lines left invariant by $w_A$ correspond to the two-dimensional subspaces left invariant by $A$. Therefore the matrix corresponding
to $w$ leaves invariant $6$ generic two-dimensional subspaces of $\mathbb C^4$, and because of that must be a multiple of the identity. Since it has trace zero, it follows that $w=0$. Contradiction. \qed

\section{Hartshorne--Hirschowitz Theorem for involutive lines}\label{section:six}

Here we are interested in the question: What is the dimension of the linear system of surfaces of degree $d$
containing a given finite set of involutive lines? If we consider  a union $\Sigma$ of $r$ pairwise disjoint lines in $\mathbb P^3$ then
the degree $d$ surfaces in $\mathbb P^3$ containing $\Sigma$ can be identified with the projectivization of the kernel of
the restriction morphism
\[
H^0(\mathbb P^3,\mathcal O_{\mathbb P^3}(d)) \longrightarrow H^0(\Sigma, \mathcal O_{\Sigma}(d)) \, .
\]
Hartshorne and Hirschowitz \cite{HH1,HH2} proved that a general set of lines imposes independent conditions
on the linear system of degree $d$ surfaces \cite{HH1}, in other words  if the $r$ lines in $\Sigma$ are in general position and $d\ge0$, then the restriction morphism
above is a linear map of  maximal rank.

Theorem \ref{Todd}  implies that the same does not hold for $d=4$ and seven involutive lines in general position, but it turns out that
this is the only forbidden pair.

\begin{theorem}\label{T:HH}
Let $\Sigma$ be a union of $r$ involutive lines in $\mathbb P^3$ in general position and $d\ge 0$ be an integer.
If $(r,d) \neq (7,4)$  then  the restriction map
\[
H^0 (\mathbb P^3, \mathcal O_{\mathbb P^3}(d)) \to H^0(\Sigma,\mathcal O_{\Sigma}(d))
\]
is of maximal rank.
\end{theorem}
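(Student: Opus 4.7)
Following the strategy of Hartshorne--Hirschowitz \cite{HH1} for non-involutive lines, my plan is to argue by induction on $d$. The base cases $d \leq 4$ can be handled by direct inspection via Lemma \ref{fundamental} and the computer-algebra methods of Section \ref{section:three}: the only obstruction among them is the excluded pair $(7,4)$, which is precisely the content of Theorem \ref{Todd}; for all other $(r,d)$ with $d \leq 4$ a finite check confirms maximal rank.

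For the inductive step with $d \geq 5$, by semicontinuity in $r$ it suffices to treat the two critical values $r_c := \lfloor \binom{d+3}{3}/(d+1) \rfloor$ and $r_c + 1$. The key specialization is onto a smooth quadric surface $Q \subset \mathbb{P}^3$ having the special property that one of its rulings consists entirely of involutive lines. Such quadrics exist: the Lagrangian Grassmannian $\mathrm{LG}(2,4) \subset G(2,4) \cong Q_4$ is itself a smooth three-dimensional quadric $Q_3 \subset \mathbb{P}^4$, and any line in $\mathrm{LG}(2,4)$ corresponds to a $\mathbb{P}^1$-family of mutually disjoint Lagrangian planes, i.e.\ a ruling of some smooth quadric surface $Q$ by involutive lines. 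Specializing $s$ of the lines of $\Sigma$ into this involutive ruling, the standard Horace exact sequence
\begin{equation*}
0 \to \mathcal{I}_{\mathrm{Res}_Q \Sigma}(d-2) \to \mathcal{I}_\Sigma(d) \to \mathcal{I}_{\Sigma \cap Q,\, Q}(d) \to 0
\end{equation*}
splits the problem into two pieces. The residual $\mathrm{Res}_Q \Sigma$ is a configuration of $r - s$ generic involutive lines in $\mathbb{P}^3$ at degree $d - 2$, falling under the inductive hypothesis. The trace $\mathcal{I}_{\Sigma \cap Q,Q}(d)$ concerns the restriction of $\mathcal{O}_Q(d, d)$ to $s$ disjoint divisors of class $(1,0)$ on $Q \cong \mathbb{P}^1 \times \mathbb{P}^1$, which has maximal rank for every $s$ by Lagrange interpolation.

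The principal obstacle is the numerical bookkeeping of the Horace recursion. One must choose $s = s(r,d)$ so that (a) both sides of the exact sequence realize maximal rank in compatible dimensions; (b) the recursion never reduces to the forbidden pair $(7,4)$ except where it is already subsumed by the base case; and (c) when the counts fall short by one, Hirschowitz's differential Horace method must be invoked to recover the missing condition along one of the specialized lines. The additional rigidity here --- namely, that every intermediate configuration must still consist of \emph{involutive} lines in general position, and that the choice of specializing quadric $Q$ must preserve this --- is precisely what makes the argument more delicate than in \cite{HH1}. I expect the verification that $(7,4)$ is the unique failure point, together with the tabular case analysis of $d$ modulo a small integer, to consume the bulk of the proof, but to proceed in close parallel with the original Hartshorne--Hirschowitz scheme.
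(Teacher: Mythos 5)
Your overall strategy (Horace-style specialization onto a quadric with an involutive ruling, inducting on $d$, following Hartshorne--Hirschowitz) is indeed the route the paper takes, but as written the proposal has two genuine gaps. First, your analysis of the trace is wrong: $\Sigma\cap Q$ is not just the $s$ specialized lines of class $(1,0)$ --- it also contains the $2(r-s)$ points where the remaining involutive lines meet $Q$, and the maximal-rank statement you need is that this mixed scheme of lines \emph{and} points imposes independent conditions on curves of the appropriate bidegree. This cannot be dismissed by ``Lagrange interpolation,'' because on a special quadric (one whole ruling involutive) the intersection points of involutive lines with $Q$ are \emph{not} general: by Chasles's theorem they either lie on the involutive ruling or hit the other ruling along an orbit of a fixed automorphism $\sigma$ of $\mathbb P^1$. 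Controlling these constrained trace points is precisely the extra difficulty relative to \cite{HH1}; the paper handles it by carefully placing the points on lines of the ruling (Lemma \ref{L:pontos} and Lemma \ref{L:a3k+1}), and you have not addressed it --- you locate the ``rigidity'' in the wrong place (keeping intermediate configurations involutive), whereas the real issue is the non-genericity of the trace.

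Second, the induction scheme and base cases do not close up as you claim. The natural recursion drops $d$ by $2$ (and, in the residue class $d\equiv 1 \bmod 3$, by $4$, via an intermediate statement involving degenerate conics with embedded points --- flat limits of two disjoint involutive lines --- not merely a ``differential Horace'' fix of an off-by-one count). Since the degree-$4$ statement is \emph{false} for the critical number of lines, the cases $d=6$ and $d=7$ cannot be reached from $d\le 3$ and must be verified independently (the paper does this by computer algebra), so ``base cases $d\le 4$'' is not enough. Relatedly, because $(d+1)$ does not divide $\binom{d+3}{3}$ in general, the inductive statement must carry $q$ auxiliary collinear points on an extra involutive line to be bijective and self-propagating; working only with $r_c$ and $r_c+1$ lines, as you propose, leaves the numerical bookkeeping (your items (a)--(c)) unresolved, and that bookkeeping is exactly where the content of the proof lies. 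Finally, note the $d=4$ endgame also requires separate verification of surjectivity for $r=6$ and injectivity for $r=8$, which your sketch omits.
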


Despite the existence of the exceptional case, the proof of Theorem \ref{T:HH} contains no
novelty when compared with the proof of Hartshorne-Hirschowitz result presented in \cite{HH1}, except
for some minor extra complications coming from the speciality of the intersection of involutive lines
with quadrics. Indeed, the proof presented in this section follows \cite{HH1} word-by-word most of the
time.

The linear system of hypersurfaces in $\mathbb P^5$ containing arbitrary number of  planes  (involutive or not)
does not seem to be studied so far, cf. \cite{CCG}.

\subsection{Special quadrics}\label{S:specialquadrics}

Let $Q \subset \mathbb P^3$ be a quadric. The restriction of the contact form $\omega \in H^0(\mathbb P^3, \Omega^1_{\mathbb P^3}(2))$
determines a foliation on $Q = \mathbb P^1 \times \mathbb P^1$.
The induced foliation  $\mathcal F_Q$ on a general $Q$ will have
normal bundle equal to $N \mathcal F_Q = \mathcal O_{\mathbb P^1 \times \mathbb P^1}(2,2)$ and trivial tangent bundle. In this case, one or two lines  from
each ruling of $Q$ will be involutive. We will say that a smooth quadric $Q  \subset \mathbb P^3$ is a \emph{special quadric} if all the lines of one of the rulings
of $Q$ are involutive. This is equivalent to requiring that three lines of one of the rulings are involutive.

The intersections of involutive lines with a special quadric are described by a classical result of Chasles (cf. \cite[Theorem 10.2.10]{Dolgachev}):
for any given  special quadric $Q$, there exists an automorphism $\sigma: \mathbb P^1 \to \mathbb P^1$ such that the involutive lines are
exactly one of the rulings (say the vertical ruling) of $Q$, or intersect the horizontal ruling of $Q$ at heights given by  an orbit of $\sigma$.

\subsection{Reduction}

The proof of Theorem \ref{T:HH} will follow step-by-step the proof of the analogous statement for lines (instead of involutive lines)
established by Hartshorne and Hirschowitz. The main part of the proof consists of an induction argument of the statement ($H_d$) below, for $d \neq 4$.

\medskip

\noindent{\bf Statement ($H_d$): }
Let
\[
r = \left\lfloor\frac{1}{d+1} \binom{d+3}{3} \right\rfloor \text{ and } q = (d+1) \left( \frac{1}{d+1} \binom{d+3}{3} - r \right) \, .
\]
Then there exists a scheme $\Sigma \subset \mathbb P^3$ given by the union of $r$ involutive lines and $q$ points contained in another
 involutive line such that the restriction map
\[
H^0 (\mathbb P^3, \mathcal O_{\mathbb P^3}(d)) \to H^0(\Sigma,\mathcal O_{\Sigma}(d))
\]
is bijective.

\begin{lemma}
The assertions $(H_1), (H_2), (H_3), (H_6)$, and $(H_7)$  hold true.
\end{lemma}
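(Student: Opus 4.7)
The plan is to verify each of the five statements by exhibiting an explicit scheme $\Sigma$, built from $r$ involutive lines plus, when $q\ne 0$, $q$ points on a further involutive line, and checking bijectivity of the restriction map; since by construction $r(d+1)+q=\binom{d+3}{3}=\dim H^0(\mathbb P^3,\mathcal O_{\mathbb P^3}(d))$, injectivity suffices in each case. $(H_1)$ is immediate, as any two disjoint lines span $\mathbb P^3$. For $(H_2)$, I would place three involutive lines $\ell_1,\ell_2,\ell_3$ in one ruling of a \emph{special} quadric $Q$ (such $Q$ exist by the Chasles description recalled in Section \ref{S:specialquadrics}) and take a fourth involutive line $\ell_4\not\subset Q$; any quadric vanishing on $\ell_1\cup\ell_2\cup\ell_3$ is then proportional to the equation of $Q$, and imposing vanishing at a single point of $\ell_4\setminus Q$ kills the kernel.

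For $(H_3)$ the strategy is similar: place three of the five involutive lines on a special quadric $Q$ and take $\ell_4,\ell_5$ as generic involutive lines not contained in $Q$. The exact sequence
\[
0 \to \mathcal O_{\mathbb P^3}(1) \xrightarrow{\cdot [Q]} I_{\ell_1\cup\ell_2\cup\ell_3}(3) \to I_{\ell_1\cup\ell_2\cup\ell_3/Q}(3) \to 0,
\]
combined with the description of sections of $\mathcal O_Q(3)\simeq\mathcal O_{\mathbb P^1\times\mathbb P^1}(3,3)$ vanishing on three lines of one ruling, yields $h^0(I_{\ell_1\cup\ell_2\cup\ell_3}(3))=4+4=8$, so the conditions coming from $\ell_4\cup\ell_5$ are exactly eight linear forms on this $8$-dimensional space. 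For $(H_6)$ and $(H_7)$, with $r=12$ and $r=15$ respectively, a by-hand analysis becomes unwieldy; here the natural approach is computational, as in Section \ref{section:three}: parametrize the involutive lines via Lagrangian planes in the normal form of Lemma \ref{normalform}, write the restriction matrix in a monomial basis with integer entries, and verify maximality of the rank by a modular computation. Zariski openness of the maximal-rank locus then promotes the conclusion to a generic configuration.

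The principal obstacle is the independence check in $(H_3)$: one has to show that the eight linear functionals imposed by $\ell_4\cup\ell_5$ are actually independent on the $8$-dimensional kernel. The cleanest route is to split this kernel as the direct sum of the $4$-dimensional subspace $\mathcal O_{\mathbb P^3}(1)\cdot [Q]$ and a $4$-dimensional complement isomorphic to $H^0(Q,\mathcal O_Q(0,3))$, and then argue, using that $\ell_4\cap Q$ and $\ell_5\cap Q$ are four general points of $Q$ while $\ell_4,\ell_5$ jointly span $\mathbb P^3$, that the eight functionals decouple into two quadruples dual to these two summands. Once this is handled, the remaining verifications are routine and the modular rank checks for $(H_6)$ and $(H_7)$ are short symbolic computations in the spirit of Section \ref{section:three}.
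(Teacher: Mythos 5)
Your proposal matches the paper's proof, which simply declares $(H_1),(H_2),(H_3)$ elementary and verifies $(H_6),(H_7)$ by computer algebra as in Section \ref{section:three}; your explicit constructions for the small cases (two disjoint lines spanning $\mathbb P^3$, three involutive lines on a special quadric plus a point off $Q$, and the restriction-to-$Q$ argument for five lines, where a section of $\mathcal O_Q(0,3)$ vanishing at the four intersection points of $\ell_4,\ell_5$ with $Q$ must vanish and the residual linear factor dies since $\ell_4,\ell_5$ span) are correct fillings-in of what the paper leaves unsaid. The modular rank computation you describe for $(H_6),(H_7)$ is exactly the paper's method.
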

\begin{proof}
The assertions $(H_1), (H_2),$ and $(H_3)$  are elementary. Assertions $(H_6)$ and $(H_7)$ have
been checked with the help of computer algebra, analogously to Section \ref{section:three}.
\end{proof}

\subsection{Inductive steps}

\begin{proposition}
If $d=0 \mod 3$ and $d\ge 3$  then $(H_{d-2})$ implies $(H_d)$.
\end{proposition}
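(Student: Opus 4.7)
The plan is to apply the Horace (Castelnuovo) method by specialising some of the involutive lines of the configuration $\Sigma$ realising $(H_d)$ onto a special quadric $Q\subset\mathbb P^3$ of the kind introduced in Section \ref{S:specialquadrics}. Since $d\equiv 0\pmod 3$, the quantities
\[
r=\frac{(d+2)(d+3)}{6},\qquad r'=\frac{d(d+1)}{6},\qquad k:=r-r'=\frac{2d+3}{3}
\]
are all integers, and because $d-2\equiv 1\pmod 3$ one checks that both $(H_d)$ and $(H_{d-2})$ are balanced configurations with no auxiliary points ($q=q'=0$). Concretely, I would choose $Q$ so that its vertical ruling is made of involutive lines, specialise $k$ of the $r$ involutive lines of $\Sigma$ to belong to that ruling, and keep the remaining $r'$ involutive lines generic in $\mathbb P^3\setminus Q$ (noting that Chasles' theorem constrains their intersections with $Q$).

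Let $\Sigma'$ denote the residual of $\Sigma$ with respect to $Q$: it is precisely the union of the $r'$ involutive lines lying off $Q$. The residual exact sequence
\[
0\longrightarrow \mathcal I_{\Sigma'}(d-2)\longrightarrow \mathcal I_{\Sigma}(d)\longrightarrow \mathcal I_{\Sigma\cap Q,\,Q}(d)\longrightarrow 0
\]
reduces the desired vanishing $h^0(\mathcal I_{\Sigma}(d))=0$ to the two sub-claims (a) $h^0(\mathcal I_{\Sigma'}(d-2))=0$, and (b) $h^0(Q,\mathcal I_{\Sigma\cap Q,\,Q}(d))=0$. Statement (a) is exactly the bijectivity provided by $(H_{d-2})$ applied to $\Sigma'$, so the inductive hypothesis disposes of it.

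For (b), the trace on $Q\cong\mathbb P^1\times\mathbb P^1$ consists of the $k$ specialised rulings (a divisor of bidegree $(k,0)$) together with the $2r'$ transverse intersection points of the residual lines with $Q$. Factoring out the defining section of the $(k,0)$ divisor identifies the cohomology group in (b) with bidegree $(d-k,d)$ sections vanishing at those $2r'$ points, and a direct count gives
\[
(d-k+1)(d+1)=\frac{d}{3}(d+1)=2r',
\]
so (b) amounts to showing that these $2r'$ points impose independent conditions on $|\mathcal O_Q(d-k,d)|$.

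The main obstacle is verifying this last independence, and it is precisely where the speciality of involutive intersections appears: by Chasles' theorem the residual points come in pairs $\{(x_{i,1},y_{i,1}),(x_{i,2},\sigma(y_{i,1}))\}_{i=1}^{r'}$ for the involution $\sigma:\mathbb P^1\to\mathbb P^1$ attached to $Q$, so the $y$-coordinates cannot be prescribed freely. To handle this I would perform a further Horace-type specialisation on $Q$, running exactly the scheme that \cite{HH1} uses for points on a smooth quadric, but respecting the $\sigma$-pairing on the $y$-coordinates; one peels off pairs one at a time by degenerating the free $x$-coordinates onto a fibre of the first projection $Q\to\mathbb P^1$ and invoking the analogous residual sequence for a $(1,0)$-divisor on $Q$, which drops the bidegree to $(d-k-1,d)$ and removes one pair from the configuration. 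Since each Chasles pair retains three free parameters $(x_{i,1},x_{i,2},y_{i,1})$, the inductive scheme of \cite{HH1} goes through with only bookkeeping modifications to respect the pairing. Once (b) is established, the specialised $\Sigma$ itself witnesses $(H_d)$, completing the inductive step.
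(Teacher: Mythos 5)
Your global strategy is the same as the paper's: writing $d=3k$, you specialise $\frac{2d+3}{3}=2k+1$ of the lines onto the involutive ruling of a special quadric $Q$, keep the remaining $r'=\frac{k(3k+1)}{2}$ involutive lines off $Q$, and use the residual exact sequence to split the problem into (a) statement $(H_{d-2})$ for the residual lines and (b) the vanishing of the space of curves of bidegree $(k-1,3k)$ on $Q$ through the $k(3k+1)$ trace points; your numerology for this reduction is correct and matches the paper exactly. Part (a) and the reduction are fine.

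The gap is in your treatment of (b), which the paper isolates as Lemma \ref{L:pontos}. The dévissage you propose on $Q$ --- specialising one Chasles pair at a time onto a fibre and residuating by that $(1,0)$-divisor, "dropping the bidegree to $(d-k-1,d)$ and removing one pair" --- cannot close. In the trace sequence $0\to\mathcal O_Q(a-1,d)\to\mathcal O_Q(a,d)\to\mathcal O_{\mathbb P^1}(d)\to 0$, imposing only the two points of one pair on the fibre leaves a trace term with $h^0=d+1-2=3k-1>0$, so the Horace inequality never yields the required vanishing; equivalently, each drop of the first bidegree by one lowers $h^0(\mathcal O_Q(\cdot,d))$ by $d+1=3k+1$, so each fibre must absorb $3k+1$ points, not $2$ --- two points on a fibre do not force a curve of bidegree $(\ast,d)$ to contain that fibre, and the bookkeeping is off by $3k-1$ conditions at every step. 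The paper's Lemma \ref{L:pontos} performs the corrected loading in one stroke: for $k$ even it specialises the residual lines so that their $k(3k+1)$ intersection points with $Q$ fill $k$ lines of the involutive ruling with $3k+1$ points each (this is compatible with Chasles' pairing, \S\ref{S:specialquadrics}, since the pairing constrains only the heights of the two points of a pair, not which ruling lines they lie on), so any curve of bidegree $(k-1,3k)$ would have to contain all $k$ ruling lines, which is impossible; for $k$ odd it uses $k-1$ full ruling lines plus $3k+1$ points spread over distinct lines of the other ruling, and concludes by openness of the condition on the configuration. So your plan is repairable, but the key independence step as you describe it does not go through and must be replaced by this (or an equivalent) one-step specialisation.
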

\begin{proof}
Write $d=3k$, $k\ge1$. To prove $(H_d)$ we have to find $Y$, a union of $r= \frac{(k+1)(3k+2)}{2}$ involutive lines,  not contained in a surface
of degree $d$. We consider $Y = Y' \cup Y''$ where $Y'$ is the union of $2k+1$ involutive lines contained in a quadric $Q$ (all belonging to the same family of lines in $Q$), and $Y''$ is
a union of $\frac{k(3k+1)}{2}$ involutive lines  intersecting $Q$ transversely.

Suppose there exists a surface $F$ of degree $d$ containing $Y$. If $F$ does not contain $Q$, then its intersection with $Q$ is a curve
of type $(3k,3k)$ containing $Y'$ and the $k(3k+1)$ points of  $Y''\cap Q$. Therefore $F\cap Q$ is the union of $Y'$ and a curve $C'$ of bidegree $(k-1,3k)$
containing the points of $Y''\cap Q$. Since $h^0(Q,\mathcal O_Q(k-1,3k) = k(3k+1)$ we expect that there exists no such curve $C'$ if the points of $Y''$ are in general position.
Lemma \ref{L:pontos} below guarantees that this is the case, and implies that our surface $F$ must contain $Q$. Taking $Q$ out of $F$ we obtain a surface of degree $d-2$
containing $Y''$. If $(H_{d-2})$ is true then we can start with $Y''$ not contained in any surface of degree $d-2$, and conclude that ($H_d$) holds true.
\end{proof}

\begin{lemma}\label{L:pontos}
If $k\ge 1$, $Y''$ is a union of $\frac{k(3k+1)}{2}$ involutive lines in general position, and $Q$ is any smooth quadric, then there is no curve of bidegree $(k-1,3k)$
in $Q$ containing $Y''\cap Q$.
\end{lemma}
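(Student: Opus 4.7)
\emph{Proof plan.} The key observation is that $h^0(Q,\mathcal O_Q(k-1,3k))=k(3k+1)$, which is exactly $|Y''\cap Q|$. Hence the conclusion of the lemma is equivalent to the assertion that the restriction map
\[
H^0(Q,\mathcal O_Q(k-1,3k))\longrightarrow H^0(Y''\cap Q,\mathcal O_{Y''\cap Q})
\]
is injective, and by upper semicontinuity of cohomology it suffices to exhibit one configuration of $\frac{k(3k+1)}{2}$ involutive lines for which this is the case. I would proceed by induction on $k$. The base case $k=1$ is immediate: a pair of generic involutive lines produces four points of $Q$ whose $y$-coordinates are pairwise distinct, so they cannot be contained in a bidegree $(0,3)$ divisor, since such a divisor is a union of three horizontal lines.

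For the inductive step I would apply the Horace method inside $Q\cong\mathbb P^1\times\mathbb P^1$. Fix a horizontal line $L\subset Q$ and specialize $k$ of the involutive lines so that each has exactly one of its two intersection points with $Q$ on $L$. This is a codimension-$k$ degeneration within the three-parameter family of involutive lines in $\mathbb P^3$, so it is realizable while keeping the remaining $\frac{k(3k-1)}{2}$ lines generic. With $Z=Y''\cap Q$ we have $|Z\cap L|=k$ and $|Z\setminus L|=3k^2$, and the residuation sequence
\[
0 \to \mathcal I_{Z\setminus L,Q}(k-1,3k-1) \to \mathcal I_{Z,Q}(k-1,3k) \to \mathcal I_{Z\cap L,L}(k-1) \to 0
\]
reduces the vanishing of $h^0(\mathcal I_{Z,Q}(k-1,3k))$ to two sub-claims. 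The quotient vanishes because $k$ points on $L\cong\mathbb P^1$ impose maximal conditions on $\mathcal O_L(k-1)$. The residual term has the same shape as the original problem with second bidegree entry lowered by one, and is handled by iterating the construction: at the $j$-th step the bidegree is $(k-1,3k-j)$ and the number of residual points is $k(3k+1-j)$, still matching $h^0$; after $3k+1$ iterations the second coordinate is negative and vanishing is automatic.

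The principal obstacle is verifying at each stage that the specialization is realizable through \emph{involutive} lines (rather than arbitrary lines in $\mathbb P^3$) and that the residual scheme remains in sufficiently general position for the next inductive step. When $Q$ is a special quadric the Chasles automorphism $\sigma$ of \S\ref{S:specialquadrics} links the two intersection points of any involutive line transverse to $Q$, so forcing one of the two points onto $L$ automatically forces the other onto the horizontal line at height $\sigma^{\pm1}(y_L)$; this correlation between specialized points has to be tracked across successive degenerations, and in borderline cases one may need to invoke the méthode d'Horace différentielle to keep the residual scheme fat enough. Apart from these verifications, which mirror the line-by-line analysis in \cite{HH1}, the induction runs in the classical manner.
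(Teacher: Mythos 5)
Your reduction --- openness of the condition plus exhibiting a single special configuration --- is the same first move as the paper's, and the numerics of your iterated Horace scheme are consistent: after $3k+1$ steps all $k(3k+1)$ points have been placed, $k$ on each of $3k+1$ lines of the non-involutive ruling, and a curve of bidegree $(k-1,3k)$ through such a configuration would have to contain all $3k+1$ of those $(0,1)$-lines, which its bidegree forbids. The gap is exactly the item you defer: realizability of this balanced configuration by pairwise disjoint \emph{involutive} lines, and it genuinely fails when $k$ is even. By Chasles (\S\ref{S:specialquadrics}), on a special quadric $Q$ (the case arising in the application, and ``any smooth quadric'' includes it) an involutive line transverse to $Q$ meets $Q$ at heights $y$ and $\sigma(y)$. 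Encode your configuration as a multigraph on the $3k+1$ chosen heights, with one edge $\{y,\sigma(y)\}$ per involutive line; you need every vertex to have degree exactly $k$. Loops are impossible (a line meeting $Q$ twice on one horizontal line would lie in $Q$), and for a special quadric whose $\sigma$ has no finite orbits --- the generic situation, which must be handled --- the support of the multigraph is a union of finite segments of $\sigma$-orbits. On a segment with $L$ vertices the degree conditions force the edge multiplicities to alternate $k,0,k,0,\dots$, which is consistent only when $L$ is even; hence the number of heights covered is always even and can never equal $3k+1$ when $k$ is even. Already $k=2$ (i.e. $d=6$) lies in the physically relevant range, so no amount of ``tracking the correlation'' or differential Horace applied to this particular specialization can close the gap; a genuinely different distribution of the points is needed, and producing one is the actual content of the lemma.

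For comparison, the paper's proof needs no induction or Horace iteration at all: it specializes in the transposed way, putting $3k+1$ points on each of $k$ lines of the \emph{involutive} ruling when $k$ is even (and, when $k$ is odd, $3k+1$ points on each of $k-1$ involutive ruling lines plus $3k+1$ further points on distinct lines of the other ruling). Since a $(k-1,3k)$-curve meets an involutive ruling line in only $3k$ points, it would have to contain $k$ (resp. $k-1$) such lines, which its bidegree forbids (resp. leaves a $(0,3k)$ residual that cannot pass through $3k+1$ points lying on distinct lines of the other ruling). Points on involutive ruling lines have unconstrained heights, so that configuration is easily realized by disjoint involutive lines; the paper's even/odd case split is precisely the pairing bookkeeping that your plan leaves open.
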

\begin{proof}
The set of $\frac{k(3k+1)}{2}$ involutive lines in general position satisfying the conclusion of the lemma is  open, and we only need to prove that it is non-empty.

If $k$ is even then we can choose the $\frac{k(3k+1)}{2}$ involutive lines
in such way that they intersect $Q$ in $k(3k+1)$ points distributed over $k$ distinct involutive lines of bidegree $(1,0)$ on $Q$, each of these involutive lines containing $3k+1$ points.
A curve of bidegree $(k-1,3k)$ containing all these points would also have to contain the $k$ involutive lines, which is impossible for a curve of this bidegree. This contradiction proves the lemma when $k$ is even.

If $k$ is odd then we can distribute  the $k(3k+1)$ points of intersection with $Q$ over $(k-1)$ involutive lines with each of them containing $3k+1$ points, and the extra $3k+1$ can be assumed to lie on $3k+1$ distinct lines of the non-involutive ruling of $q$. Clearly, a curve of bidegree $(k-1,3k)$  cannot pass through all these points, and the lemma follows  for $k$ odd.
\end{proof}

\begin{proposition}
If $d=2 \mod 3$ and $d\ge 3$  then $(H_{d-2})$ implies $(H_d)$.
\end{proposition}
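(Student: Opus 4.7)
Write $d = 3k+2$ with $k \geq 1$. A direct computation gives
\[
r_d = \frac{3(k+1)(k+2)}{2}, \quad q_d = k+1, \quad r_{d-2} = \frac{(k+1)(3k+2)}{2}, \quad q_{d-2}=0,
\]
so $r_d - r_{d-2} = 2(k+1)$. The plan mirrors the proof of the previous proposition. I would fix a \emph{special quadric} $Q \subset \mathbb P^3$ in the sense of \S\ref{S:specialquadrics} and search for $Y = Y' \cup Y'' \cup \{p_1, \ldots, p_{k+1}\}$, where $Y'$ is a union of $2(k+1)$ involutive lines in the involutive ruling of $Q$, $Y''$ is a general union of $r_{d-2}$ involutive lines transverse to $Q$ realizing $(H_{d-2})$, and the $p_i$ are $k+1$ general points placed on one further involutive line $\ell^* \subset Q$ from the same ruling as $Y'$.

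Suppose $F$ is a surface of degree $d$ containing $Y$. If $F \supset Q$, write $F = Q \cdot G$ with $\deg G = 3k$; the $p_i$ impose no condition on $G$ because $\ell^* \subset Q \subset F$, so $G$ must contain $Y''$, contradicting $(H_{d-2})$. Otherwise $C := F \cap Q$ is a curve of bidegree $(d, d)$ on $Q$ containing $Y'$, the $(k+1)(3k+2)$ points of $Y'' \cap Q$, and the $p_i$. Removing $Y'$ (and also $\ell^*$ in the sub-case $F \supset \ell^*$) yields a residual curve $C'$ on $Q$ of bidegree $(k, 3k+2)$, respectively $(k-1, 3k+2)$, still passing through $Y'' \cap Q$ and, in the first sub-case, through the $p_i$. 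Computing
\[
h^0(\mathcal O_Q(k-1, 3k+2)) = k(3k+3) < (k+1)(3k+2) = \#(Y'' \cap Q),
\]
\[
h^0(\mathcal O_Q(k, 3k+2)) = (k+1)(3k+3) = \#(Y'' \cap Q) + (k+1),
\]
reduces both sub-cases to showing that the listed points impose independent conditions on the corresponding linear system.

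The main obstacle is the tight second sub-case. I propose to handle it by a specialization argument in the spirit of Lemma \ref{L:pontos}. Using the Chasles description in \S\ref{S:specialquadrics} with $\sigma \in \mathrm{Aut}(\mathbb P^1)$ chosen to be an involution, one can degenerate $Y''$ so that the $(k+1)(3k+2)$ points of $Y'' \cap Q$ accumulate on $3k+2$ rulings of the ruling family opposite to the involutive one, with $k+1$ points on each, the heights forming a $\sigma$-invariant set. A direct count shows this is realizable separately for $k$ even (using $(3k+2)/2$ pairs $\{y, \sigma(y)\}$) and for $k$ odd (using $(3k+1)/2$ such pairs together with one fixed point of $\sigma$, along which $(k+1)/2$ involutive lines of $Y''$ meet $Q$ twice). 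Since a curve of bidegree $(k, 3k+2)$ meets each such ruling in only $k$ points, it must contain all $3k+2$ of them, exhausting its second bidegree and leaving a residue of bidegree $(k, 0)$ consisting of $k$ rulings in the involutive family. The points $p_i$ have generic heights on $\ell^*$ and therefore lie on none of the $3k+2$ chosen rulings, so the only way they can sit on $C'$ is for $\ell^*$ itself to be one of the $k$ components of this residue---contradicting the standing hypothesis $F \not\supset \ell^*$ of this sub-case. The first sub-case is easier, since the strict inequality $h^0(\mathcal O_Q(k-1, 3k+2)) < \#(Y'' \cap Q)$ admits an analogous but less delicate specialization. Semicontinuity then promotes independence from the specialization to the generic configuration, completing $(H_d)$.
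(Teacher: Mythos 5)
Your overall plan coincides with the paper's: the same configuration $Y'\cup Y''$ plus $k+1$ points on a further line $\ell^*$ of the involutive ruling of a special quadric $Q$, the same dichotomy $F\supset Q$ versus $F\not\supset Q$ with $(H_{d-2})$ disposing of the first branch, and a Chasles-constrained degeneration of $Y''\cap Q$ to exclude the residual curve on $Q$. The difference is the degeneration itself: you concentrate the points on $3k+2$ lines of the \emph{non-involutive} ruling ($k+1$ points each) and split according to whether $F\supset\ell^*$, whereas the paper concentrates most points on lines of the \emph{involutive} ruling (forcing the residual curve to use up its first bidegree) and needs no sub-case on $\ell^*$. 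For $k$ even your version of the key step is correct and gives a legitimate alternative.

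The gap is the $k$ odd half of your specialization: the configuration you describe does not exist. If a line of $Y''$ (not contained in $Q$) met $Q$ in two points of the same height, those two points would span the corresponding line $h$ of the non-involutive ruling, so the line would be $h\subset Q$; thus no line of $Y''$ can ``meet $Q$ twice'' along the ruling line over a fixed point of $\sigma$. Worse, no transverse involutive line meets $Q$ at \emph{any} point of the two fixed-height ruling lines, since at such points the contact plane coincides with the tangent plane of $Q$ and every involutive line through them is tangent to $Q$ there. So for $k$ odd the forcing argument has nothing to act on. (It is repairable, e.g.\ place $k+1$ points on each of $3k+1$ horizontal lines forming $(3k+1)/2$ $\sigma$-pairs and spread the last $k+1$ points over two further horizontal lines; after removing the $3k+1$ heavy lines the residual curve has bidegree $(k,1)$, meets $\ell^*$ in one point, and the $k+1\ge 2$ generic points $p_i$ still force $\ell^*\subset F$ --- but this is not what you wrote.) A second, smaller gap: in the sub-case $F\supset\ell^*$ your ``analogous'' specialization in fact fails, because once all of $Y''\cap Q$ lies on $3k+2$ horizontal lines their union is itself a curve of bidegree $(0,3k+2)\le(k-1,3k+2)$ through all the points, so no contradiction arises; you need a genuinely different degeneration there (e.g.\ $3k+3$ points on each of $k$ lines of the involutive ruling, as in Lemma \ref{L:pontos} and in the paper's proof), which is fine by semicontinuity since the relevant conditions are open, but it must be exhibited rather than asserted.
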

\begin{proof}
Write $d=3k+2$, $k\ge 1$. Now we have to find $Y$, a union of $r=\frac{(k+1)(3k+6)}{2}$ involutive lines and $q=k+1$  points contained in another involutive line, so that $Y$ is not contained in a surface of degree $d$. Set $Y'$ equal to the union of $2k+2$  involutive lines contained in a quadric $Q$ and $k+1$ points contained in another involutive line of $Q$, with all the $2k +3$ lines contained in the same family.

If $k$ is even then set  $Y''$ equal to  the union of $\frac{(k+1)(3k+2)}{2}$ involutive lines not in $Q$, such that the $(k+1)(3k+2)$ points of intersection of these lines with $Q$
can be written as a disjoint union $A \cup B$ such that
\begin{enumerate}
\item the set $A$ consists of $k(3k+3)$ points contained in $k$ distinct involutive lines $\ell_1, \ldots, \ell_k$ of the same family mentioned before; and each of these $k$ lines contains exactly $3k+3$ of these points;
\item the set $B$ consists of the other $2k+2$ points and together with the $k+1$ points of $Y'$, form a set of $3k+3$ points that do not contain
 two points belonging to the same line belonging to the other family of lines of $Q$.
\end{enumerate}
If $S$ is a surface of degree $d$ containing $Y = Y' \cup Y''$, then either $S$ contains $Q$, or the intersection of $S$ with $Q$ will be equal to
the union of the lines in $Y'$ together with a curve of bidegree $(k,3k+2)$ containing the union of $A$, $B$, and  the $k+1$ points in $Y'$.  A curve $\Gamma$ of bidegree $(k,3k+2)$
containing $A$ must contain $\ell_1, \ldots, \ell_k$. Therefore $\Gamma - \cup_{i=1}^k \ell_i$ is a curve of bidegree $(0,3k+2)$ containing the points of $B$ and the $k+1$ points
in $Y'$. Since no two of these points are contained in a line of the other family we arrive at a contradiction, showing that $S$ must contain $Q$. Since this holds for this particular choice of $Y''$, it also holds true for a generic choice of $Y''$. Hence we conclude the proof of the proposition when $d$ is even.

Suppose now that $d$ (and thus also $k$) is odd, and set  $Y''$ equal to  the union of $\frac{(k+1)(3k+2)}{2}$ involutive lines not in $Q$, such that the $(k+1)(3k+2)$ points of intersection of these lines with $Q$
 can be written as a disjoint union $A \cup B \cup C$ such that
\begin{enumerate}
\item the set $A$ consists of $(k-1)(3k+3)$ points contained in $k-1$ distinct involutive lines $\ell_1, \ldots, \ell_{k-1}$ of the same family mentioned before; and each of these $k-1$ lines contains exactly $3k+3$ of these points;
\item the set $B$ consists of $2k+2$ points contained in the line $\ell_0$;
\item the set $C$ consists of $(2k+2) + k+1=3k+3$ points which do not contain
 two points belonging to the same line belonging to the  other family of lines of $Q$.
\end{enumerate}
If $S$ is a surface of degree $d$ containing $Y = Y' \cup Y''$ and not containing the quadric $Q$ then the intersection of $S$ with $Q$ will be equal to
the union of the lines in $Y'$ together with a curve $\Gamma$ of bidegree $(k,3k+2)$ containing the union of $A$, $B$, $C$ and  the $k+1$ points in $Y'$.
The curve $\Gamma$ must contain the $k-1$ lines $\ell_1, \ldots, \ell_{k-1}$, as well as the line $\ell_0$, since each of these lines contains $3k+3$ points and
$\Gamma$ has bidegree $(k,3k+2)$.   It follows that $\Gamma \setminus \cup_{i=0}^{k-1} \ell_i$ is the union of $(3k+2)$ points of the other family
and cannot contain all the points of $C$. As before, we arrived at a contradiction which shows that $S$ contains $Q$, and allow us to conclude.
\end{proof}

When $d = 1 \mod 3$ the reasoning of Hartshorne and Hirschowitz uses schemes with nilpotents elements and also the notion of \emph{residual schemes}.
If $H$ and $Y$ are subschemes of $\mathbb P^3$ then  $Z = res_H Y$,  the residual scheme of $Y$ in $H$, is the subscheme of
$\mathbb P^3$ with defining ideal given the kernel of the natural morphism
\[
\mathcal O_{\mathbb P^3} \longrightarrow Hom(  I_H, \mathcal O_Y) \, .
\]
If $H$ is a surface of degree $d$ then the residual scheme fits into the exact sequence
\[
0 \to \mathcal O_Z(-d) \to \mathcal O_Y \to \mathcal O_{Y\cap H} \to 0 \, .
\]

Let us consider  $Y_{\varepsilon}$, $\varepsilon \in \mathbb C^*$, the family
of disjoint lines $y=z=0$ and $x=z-\varepsilon=0$ on $\mathbb C^3$. For any $\varepsilon \in \mathbb C^*$
the two lines are involutive with respect to the contact structure determined by $xdy - ydx + dz$.
The ideal of $Y_{\epsilon}$ is
\[
(y,z) \cap (x,z-\varepsilon) = (xy, xz, y(z-\varepsilon), z(z-\varepsilon)) \, .
\]
The flat limit when $\varepsilon \to 0$ is the ideal $(xy,xz,yz, z^2)$. It represents the union of the lines $x=z=0$ and $y=z=0$
together with an immersed point at the origin. If $Y$ is the associated scheme and $H$ is the plane $y=x$, then the ideal of the
scheme $H\cap Y$ is $(y-x,x^2, xz, z^2)$. Thus $H\cap Y$ is a subscheme of $H$ supported at one point $p\in H$  and with structural
ring equal to $\mathcal O_{H,p} / \mathfrak m_{H,p}^2$. It is a scheme of length three or, in other words, a triple point in $H$.

The residual scheme has ideal $(z,xy)$ which is a reduced degenerate conic  at the plane $z=0$.

\begin{proposition}
If $d=3k+1$ with $k\ge 3$ then $(H_{d-4}) \implies (H'_{d-2}) \implies (H_d)$, where $(H'_{d-2})$ is the following

\noindent{\bf Statement ($H'_{d-2}$): } there exists a scheme $Y \subset \mathbb P^3$ which is the union of $\frac{(k-1)(3k-2)}{2}$ involutive
lines and $2k$ involutive reduced degenerate conics having their singular points at a special quadric $Q$, and such that the
natural morphism
\[
\rho(d-2) : H^0(\mathbb P^3, \mathcal O_{\mathbb P^3}(d-2))  \to   H^0(Y, \mathcal O_Y(d-2))
\]
is bijective.
\end{proposition}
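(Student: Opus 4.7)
Both implications follow the template of the previous two propositions, combining the Castelnuovo-type exact sequence
\[
0 \to I_{\mathrm{res}_Q Z}(e-2) \to I_Z(e) \to I_{Z \cap Q, Q}(e) \to 0,
\]
for a special smooth quadric $Q\subset \mathbb P^3$ (Section \ref{S:specialquadrics}), with upper semi-continuity of $h^0(I_Z(e))$ in flat families of subschemes of $\mathbb P^3$.

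For $(H_{d-4}) \Longrightarrow (H'_{d-2})$, I would construct the scheme $W$ in the statement by starting from an $(H_{d-4})$-configuration and augmenting it by placing additional involutive lines and the $2k$ singular points of the reduced degenerate conics on a special quadric $Q$, with the conic branches transverse to $Q$. The residual $\mathrm{res}_Q W$ is arranged to be precisely an $(H_{d-4})$-scheme, so $H^0(I_{\mathrm{res}_Q W}(d-4))=0$ by the inductive hypothesis; the exact sequence then reduces the problem to showing $H^0(I_{W\cap Q, Q}(d-2))=0$, i.e.\ that no curve on $Q\cong\mathbb P^1\times\mathbb P^1$ of the appropriate bidegree contains the ruled components on $Q$ together with the points of $W\cap Q$. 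This is a combinatorial argument in the spirit of Lemma \ref{L:pontos}: the intersection points are distributed along involutive lines of the involutive ruling of $Q$ so that any curve of the forbidden bidegree would be forced to contain a full ruling line, contradicting its bidegree. Once injectivity of $\rho(d-2)$ is established, the dimension count (matching $h^0(\mathcal O_W(d-2))$ with $h^0(\mathcal O_{\mathbb P^3}(d-2))$) upgrades it to bijectivity.

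For $(H'_{d-2}) \Longrightarrow (H_d)$, I would specialize the $r=\tfrac{(k+1)(3k+4)}{2}$ disjoint involutive lines required by $(H_d)$ as follows: place $2k+1$ of them on $Q$ along the involutive ruling; pair up $4k$ of the remaining lines into $2k$ disjoint pairs, each pair being specialized (via the flat limit described before the proposition) to a degenerate conic with an embedded point at its singular point, placed on $Q$; and distribute the last $\tfrac{(k-1)(3k-2)}{2}$ lines off $Q$ in a generic $(H'_{d-2})$-type arrangement. The local computation preceding the proposition shows that $\mathrm{res}_Q$ absorbs the embedded points at the singular points lying on $Q$, so $\mathrm{res}_Q Y$ is precisely a scheme satisfying $(H'_{d-2})$, yielding $H^0(I_{\mathrm{res}_Q Y}(d-2))=0$. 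It then remains to verify $H^0(I_{Y\cap Q, Q}(d))=0$ by another bidegree and length count on $Q$; the exact sequence then gives $H^0(I_Y(d))=0$ on the specialization, and semi-continuity promotes the vanishing to the generic configuration of involutive lines required by $(H_d)$.

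The delicate step in both implications is the combinatorial analysis on $Q$: one must track the scheme-theoretic lengths of $W\cap Q$ and $Y\cap Q$ at the singular points of the (reduced or non-reduced) degenerate conics (length $2$ and length $3$ respectively, by the local computation preceding the proposition) and arrange the remaining intersection points along the rulings so that every non-trivial curve on $\mathbb P^1\times\mathbb P^1$ of the target bidegree is forced to contain a full ruling line. As in the $d\equiv 2 \pmod 3$ case, a case distinction based on the parity of $k$ is likely to intervene in distributing the points along the non-involutive ruling. With this combinatorial step in place, the residual sequence yields the required vanishing in $\mathbb P^3$ and the inductive step is complete.
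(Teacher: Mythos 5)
Your second implication, $(H'_{d-2}) \Rightarrow (H_d)$, is essentially the paper's argument: $2k+1$ lines of the involutive ruling of a special quadric $Q$, $2k$ pairs of lines specialized to the non-reduced degenerate conics with embedded point on $Q$, residual scheme equal to an $(H'_{d-2})$-configuration, trace handled on $Q$, and semi-continuity at the end. Be aware, though, that the trace step is not a routine ``bidegree and length count'': it is the content of Lemma \ref{L:a3k+1} ($\alpha(3k+1)$ bijective for $k\ge 3$), whose proof requires a rather careful placement of the points and triple points, with a parity-of-$k$ case distinction and attention to the Chasles constraint of \S\ref{S:specialquadrics}.

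The first implication, $(H_{d-4}) \Rightarrow (H'_{d-2})$, does not work as you describe it. You keep the $2k$ reduced degenerate conics transverse to $Q$, touching it only at their singular points. But then $\mathrm{res}_Q W$ retains each conic in its entirety (a reduced curve with no component inside $Q$ coincides with its own residual), so the residual is not a union of pairwise disjoint involutive lines in general position and the hypothesis $(H_{d-4})$ cannot be applied to it. The component count fails as well: if the simple lines of $W$ contain a full $(H_{d-4})$-configuration of $\frac{k(3k-1)}{2}$ lines plus anything more, then $W$ cannot have the shape demanded by $(H'_{d-2})$, which allows only $\frac{(k-1)(3k-2)}{2}=\frac{k(3k-1)}{2}-(2k-1)$ simple lines. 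The paper's construction fixes both problems simultaneously: one branch of each reduced conic, together with one further simple line, is placed \emph{inside} the involutive ruling of $Q$. Then $\mathrm{res}_Q Y$ consists precisely of the other $2k$ branches and the remaining $\frac{(k-1)(3k-2)}{2}-1$ lines, i.e.\ $\frac{k(3k-1)}{2}$ disjoint lines in general position --- exactly an $(H_{d-4})$-scheme --- while the trace $Y\cap Q$ is $2k+1$ ruling lines plus $2k+(3k^2-5k)$ points, whose independence (bijectivity of $\alpha(d-2)$) is then proved within the Chasles constraint on how involutive lines meet a special quadric, again splitting into the cases $k$ even and $k$ odd. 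Without moving one branch of each conic (and one extra line) onto $Q$, the inductive step has no valid residual scheme to hand to $(H_{d-4})$, so this half of your argument has a genuine gap.
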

\begin{proof}
We will start by proving that $(H'_{d-2})$ implies $(H_d)$. We have to find $\frac{(k+1)(3k+4)}{2}$ involutive lines such that $\rho(d)$ is
bijective. As the condition is open it suffices to produce one specialization of a union of disjoint involutive lines
having the sought property. We start by choosing a special quadric $Q$ and will take
$Y=Y'\cup Y''$, where $Y'$ is the union of $2k+1$ involutive lines on the involutive ruling of $Q$, and $Y''$  as the union of $\frac{(k-1)(3k-2)}{2}$
involutive lines in general position together with $2k$ involutive degenerate conics having nilpotent elements at singular points, which
are limits of pairs of disjoint involutive lines and have their singular points at $Q$.

As discussed above, the residual intersection of $Y''$ with $Q$ is nothing but $Y''_{red}$. We have an exact diagram
\[
\xymatrix{
0 \ar[r]& H^0(  \mathcal O_{\mathbb P^3}(d-2))   \ar^{\rho(d-2)}[d]\ar[r]  & H^0(  \mathcal O_{\mathbb P^3}(d))  \ar^{\rho(d)}[d]\ar[r]  &
H^0(  \mathcal O_{Q}(d))  \ar^{\alpha(d)} [d]\ar[r] & 0 \\
0 \ar[r]& H^0(  \mathcal O_{Y''_{red}}(d-2))  \ar[r]  &H^0( \mathcal O_Y(d)) \ar[r]  & H^0(  \mathcal O_{Y \cap Q}(d)) \ar[r] & 0 \\
}
\]
The hypothesis $(H'_{d-2})$ implies that the leftmost vertical arrow is bijective. Lemma \ref{L:a3k+1} implies that $\alpha(d)$ is bijective
if $d=3k+1$ with $k\ge 3$, and so the rightmost arrow is also bijective. Hence the same holds true for the middle arrow, and we have that $(H_d)$ holds true.

Let us now verify that $(H_{d-4})$  implies $(H'_{d-2})$. We want to find a scheme $Y$ which is the union of  $\frac{(k-1)(3k-2)}{2}$ involutive lines and $2k$ degenerate conics with singular points on the special quadric $Q$ such that $\rho(3k-1)$ is bijective. We will choose $Y$ with one of the lines of each degenerate conic belonging to the involutive fibration of $Q$, and one further simple line belonging to the same fibration. The other $2k$ lines belonging to the degenerate conics and the remaining  $\frac{(k-1)(3k-2)}{2} -1$  involutive lines will be chosen in general position. Therefore the residual scheme $Y''= res_Q Y$ is formed by $2k + \frac{(3k^2 - 5k)}{2}= \frac{k(3k-1)}{2}$ involutive lines in general position.

Let us consider the diagram below.
\[
\xymatrix{
0 \ar[r]& H^0(  \mathcal O_{\mathbb P^3}(d-4))   \ar^{\rho(d-4)}[d]\ar[r]  & H^0(  \mathcal O_{\mathbb P^3}(d-2))  \ar^{\rho(d-2)}[d]\ar[r]  &
H^0(  \mathcal O_{Q}(d-2))  \ar^{\alpha(d-2)} [d]\ar[r] & 0 \\
0 \ar[r]& H^0(  \mathcal O_{Y''_{red}}(d-4))  \ar[r]  &H^0( \mathcal O_Y(d-2)) \ar[r]  & H^0(  \mathcal O_{Y \cap Q}(d-2)) \ar[r] & 0 \\
}
\]
On the one hand  the leftmost vertical morphism $\rho(d-4)=\rho(3k-1)$ is bijective, by hypothesis. On the other hand the intersection of $Y$ with $Q$
is the union of $2k+1$ involutive lines and $2k + (3k^2 - 5k)$ points on $Q$. The $2k$ points coming from the intersection of smooth points of
the  conics are in general position in $Q$, and the remaining $(3k^2 - 5k)$ points are constrained only by Chasles Theorem, see Subsection \S\ref{S:specialquadrics} on special quadrics.
If $k$ is even then we can place $(3k^2- 6k)=3k(k-2)$ points over $k-2$ involutive  lines each containing $3k$ points. This suffices to ensure that an element
in the kernel of $\alpha(d-2)=\alpha(3k-1)$ is a product of $3k-1$ involutive lines and $3k-1$ non-involutive lines. But we can choose the remaining $3k$ points not contained
in any set of $3k-1$ non-involutive lines. This shows that $\alpha(d-2)$ is bijective for a general $Y$ when $d$ is even.  Similarly, if $d$ is odd then we
can place $(3k^2- 5k)=3k(k-2) + k$ over  $k-1$ involutive lines in such a way that $k-2$ involutive lines contains $3k$ points, and the remaining involutive line contains $k$
points in general position. If we place the $2k$ points coming from the degenerate conics in general position on this last line, the injectivity of $\alpha(d-2)$ follows
as before. To summarize, for a general $Y$ the morphism $\alpha(d-2)$ is also bijective. It follows that the middle vertical morphism is also bijective, i.e., $(H'_{d-2})$ holds true. \end{proof}

\begin{lemma}\label{L:a3k+1}
The map $\alpha(3k+1)$ is bijective when $k\ge 3$.
\end{lemma}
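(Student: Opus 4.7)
The plan is the classical ``restriction to a quadric'' strategy: reduce the bijectivity of $\alpha(3k+1)$ to the vanishing of a form of lower bidegree on a $0$-dimensional subscheme of $Q$, and verify that vanishing through a specialization of $Y''$ compatible with Chasles' theorem.

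Writing $Q=\mathbb{P}^1\times\mathbb{P}^1$ with the involutive ruling as the vertical lines $V_a=\{a\}\times\mathbb{P}^1$, one has $\mathcal{O}_Q(3k+1)=\mathcal{O}_Q(3k+1,3k+1)$ and $h^0(Q,\mathcal{O}_Q(3k+1))=(3k+2)^2$. The scheme $Y\cap Q$ decomposes into $Y'$ (contributing $(2k+1)(3k+2)$) and its $0$-dimensional part: $(k-1)(3k-2)$ reduced points from the $\frac{(k-1)(3k-2)}{2}$ simple involutive lines, $2k$ length-$3$ fat points at the singular points of the degenerate conics (which lie on $Q$), and $4k$ reduced points from the non-singular intersections of the $4k$ conic branches with $Q$, giving a total length $(k-1)(3k-2)+6k+4k=(k+1)(3k+2)$. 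Hence the target of $\alpha(3k+1)$ also has dimension $(3k+2)^2$, and bijectivity reduces to injectivity. A kernel element $F$ must vanish on $Y'$, hence $F=\prod_{i=1}^{2k+1}(x-a_i)\cdot G$ with $G\in H^0(\mathcal{O}_Q(k,3k+1))$ of matching dimension $(k+1)(3k+2)$; I must show $G\equiv 0$.

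Since the maximal-rank condition is open in $(Y'',Q)$, it is enough to exhibit one specialization forcing $G=0$. I would choose $k$ vertical lines $V_{a_1^*},\ldots,V_{a_k^*}$ of $Q$ and arrange $Y''$ so that: (a) all $2k$ singular points of the degenerate conics lie on these verticals, with distinct heights $b_1,\ldots,b_{2k}$; (b) the remaining simple-line and conic-branch intersections are positioned so that each $V_{a_j^*}$ carries at least $3k+2$ conditions, counting each fat point at $(a_j^*,b)$ as a double zero of $G|_{V_{a_j^*}}$ at $y=b$; and (c) exactly $k+2$ residual simple-line intersection points lie off the chosen verticals, at heights distinct from each other and from the $b_j$. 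Each $G|_{V_{a_j^*}}$ is then a polynomial in $y$ of degree $\leq 3k+1$ with at least $3k+2$ zeros (with multiplicities), so vanishes identically; thus $G=\prod_{j=1}^k(x-a_j^*)\cdot G_0(y)$ with $\deg G_0\leq 3k+1$. The still-unused conditions then yield $3k+2$ zero-conditions on $G_0$ at distinct heights: (i) for each fat point $(a_j^*,b_j)$, the condition $\partial_x G(a_j^*,b_j)=0$ reduces through the factorization to $\prod_{j'\neq j}(a_j^*-a_{j'}^*)\cdot G_0(b_j)=0$, hence $G_0(b_j)=0$; (ii) each of the $k+2$ off-vertical simple points at $(a,b)$ yields $G_0(b)=0$. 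So $G_0$, of degree $\leq 3k+1$, has $3k+2$ distinct roots; hence $G_0\equiv 0$ and $G\equiv 0$.

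The main obstacle will be verifying that this specialization can be realized under Chasles' theorem: on the special quadric $Q$, the two heights at which an involutive line meets $Q$ form an orbit $\{b,\sigma(b)\}$ of a fixed $\sigma\in\operatorname{Aut}(\mathbb{P}^1)$. Thus each conic-branch extra intersection at a fat point $(a_j^*,b_j)$ has height $\sigma(b_j)$ (with the $x$-coordinate a free choice via the one-parameter family of involutive lines through a fixed point), and each simple line has its two heights $\sigma$-paired. For $k\geq 3$ the combinatorial/parameter count is comfortably favorable: there is enough slack to distribute the $2k$ fat points onto $k$ verticals, to place the $4k$ conic branches and enough simple-line endpoints on those verticals, and to leave $k+2$ off-vertical simple points whose heights (together with the $b_j$ and the $\sigma(b_j)$ that already appear) remain generically distinct. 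The small cases $k=1,2$ (i.e.\ $d=4,7$) lie outside the lemma's scope, consistent with $(H_4)$ being actually false by Theorem \ref{thm4} and $(H_7)$ being handled separately by computer algebra.
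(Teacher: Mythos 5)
Your proposal is correct and follows essentially the same route as the paper: after the dimension count reduces bijectivity to injectivity, one specializes $Y''$ (justified by semicontinuity) so that the intersection points pile up on lines of the involutive ruling, forcing any kernel element of bidegree $(3k+1,3k+1)$ to contain $3k+1$ vertical lines, and then the transverse conditions at the $2k$ fat points together with $k+2$ general points give $3k+2>3k+1$ conditions killing the residual factor of bidegree $(0,3k+1)$ --- the paper's version concentrates the triple points on two auxiliary verticals and fills out lines with its sets $A$, $B$, $C$, but the mechanism and the final count ($2k+(k+2)=3k+2$) are identical to yours. The only point you leave implicit is the explicit distribution showing each chosen vertical can receive the required $3k+2$ conditions compatibly with Chasles' constraint (for instance two fat points, four branch points and $3k-6$ simple-line points per vertical works for all $k\ge 3$, leaving exactly $k+2$ off-vertical points), which is routine to check.
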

\begin{proof}
Notice that $Y\cap Q$ consists of $2k+1$ involutive lines on the involutive ruling of $Q$, since $Y' \subset Q$, and
$(k-1)(3k-2) + 4k$ simple points together with $2k$ triple points. If $\alpha(d)$ is not bijective, there exists a curve of bidegree $(d,d)= (3k+1,3k+1)$
containing schematically  all these points, triple points, and lines.

To prove that $\alpha(d)$ is bijective for a general $Y''$, it suffices, by semi-continuity, to prove it for a special $Y''$ which we now proceed to construct.

If $k$ is even and $k\ge 4$ then  we start by choosing $4$ involutives lines belonging to the involutive ruling of  $Q$ and distinct from the $2k+1$ involutive lines in $Y'$, say $\ell_1, \ell_2, \ell_3,$ and $\ell_4$.
In each of the $\ell_1$ and $\ell_2$ we place $k$ triple points and choose the corresponding degenerate involutive conics of $Y''$ in such  way that
each of them intersect each of the lines $\ell_3$ and $\ell_4$ in one  point. We put $k+2$ simple points in $\ell_1$ with the corresponding involutive lines
intersecting $\ell_2$. And finally we place $k+2$ simple points in $\ell_3$ with the corresponding involutive lines intersecting $\ell_4$. If we set $A$ equal to
the union of these $2k$ degenerate conics and $2(k+2)$ involutive lines, then the length of the restriction of  $A \cap Q$ to each of the lines $\ell_i$, $i=1,\ldots,4$,
is $3k+2$.  Now let $B$ be a union of $\frac{(k-4)(3k+2)}{2}$ involutive lines such that $B \cap Q$ is contained in $k-4$ lines of the involutive ruling
and each of these contains $3k+2$ points; and let $C$ be a union of general $\frac{k}{2} +1$ involutive lines. If we choose $Y'' = A \cup B \cup C$ then a curve $C$
of bi-degree $(d,d)=(3k+1,3k+1)$ containing $Y \cap Q = (Y' \cup Y '') \cap Q$ must contain the $2k+1$ lines in $Y'$, the $4$ involutive lines $\ell_1, \ldots, \ell_4$,
and the $(k-4)$ involutive lines containing the support of $B$. It follows that a hypothetical curve $C$ containing $Y\cap Q$ must be a product of $3k+1$ involutive lines
and $3k+1$ non-involutive lines of $Q$.

If we further assume that no two of the $2k$ triple points in $A\cap Q$ have support on the same line of the non-involutive ruling, and we assume the same for the $k+1$ points of intersection of $C$ with $Q$, then it follows that such a hypothetical curve cannot exists, since it would pass through the support of $2k$ triple points and $C\cap Q$. Therefore $\alpha(d)$ is bijective for $Y''$ sufficiently general
when $k$ is even and $k\ge 4$.

If $k$ is odd and $k\ge 3$ we will proceed similarly. We choose $3$ involutives lines belonging to the involutive ruling of  $Q$ and distinct from the $2k+1$ involutive lines in $Y'$, say $\ell_1, \ell_2,$ and $\ell_3$.  On the line the $\ell_1$ (resp. the line $\ell_2$) we place $k$ triple points and choose the corresponding degenerate involutive conics of $Y''$ in such  way that each of them intersects the lines $\ell_2$ (resp. $\ell_1$) and $\ell_3$ in one  point. We put $2$ simple points on $\ell_1$ with the corresponding involutive lines
intersecting $\ell_2$; and $k+2$ simple points on $\ell_3$. We set $A$ equal to the union of these $2k$ degenerate conics, and $k+4$ involutive lines.
If the triple points and simple points in $\ell_3$ are in general position then the support of the $2k$ triple points and the $k+2$ simple points outside of $\ell_3$ coming from the involutive lines intersecting $\ell_3$, will belong to $3k+2$ distinct lines of the non-involutive ruling of $Q$.
Let   $B$ be a union of $\frac{(k-3)(3k+2)}{2}$ involutive lines such that $B \cap Q$ is contained in $k-3$ lines of the involutive ruling
and each of these contains $3k+2$ points. If we set $Y'' = A \cup B$ then the same argument used for $k$ even implies that there is no curve $C$
of bidegree $(3k+1,3k+1)$ in $Q$  containing schematically $Y'' \cap Q$. We conclude that $\alpha(3k+1)$ is bijective  for $k\ge 3$.
\end{proof}

\subsection{Synthesis} We can summarize what we have proved so far in the following theorem.

\begin{theorem}\label{T:Hd}
If $d\neq4$ then statement $(H_d)$ holds true.
\end{theorem}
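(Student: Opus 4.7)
The plan is to prove Theorem~\ref{T:Hd} by strong induction on $d$, combining the three inductive propositions of Section~\ref{section:six} with the base-case lemma. The argument has no real content beyond bookkeeping modulo $3$ and a verification that the inductive recursion never demands the forbidden value $(H_4)$.

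The preceding lemma supplies the base cases $(H_d)$ for $d \in \{1,2,3,6,7\}$. These five values are precisely what is required: the trivial range $d \leq 3$, together with the two augmented base cases $d = 6$ and $d = 7$, which are the first values in the residue classes $0$ and $1 \pmod 3$ at which the natural inductive reduction would otherwise force us to invoke $(H_4)$.

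For the inductive step, I would fix $d \geq 5$ with $d \neq 4$ and assume $(H_{d'})$ is known for every $d'$ with $1 \leq d' < d$ and $d' \neq 4$, then split into three cases according to $d \bmod 3$. If $d \equiv 0 \pmod 3$, then in view of the base cases $d \geq 9$, and the first inductive proposition reduces $(H_d)$ to $(H_{d-2})$ with $d - 2 \geq 7$ and $d - 2 \equiv 1 \pmod 3$. If $d \equiv 2 \pmod 3$, then $d \geq 5$, and the second inductive proposition reduces $(H_d)$ to $(H_{d-2})$ with $d - 2 \equiv 0 \pmod 3$ and $d - 2 \geq 3$. If $d \equiv 1 \pmod 3$, then in view of the base cases $d \geq 10$, so $d = 3k + 1$ with $k \geq 3$, and the third inductive proposition reduces $(H_d)$ to $(H_{d-4})$ with $d - 4 \geq 6$ and $d - 4 \equiv 0 \pmod 3$. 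In every case the index appearing on the right-hand side is an integer different from $4$, so the inductive hypothesis is applicable.

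Together with the base cases, the three residue classes exhaust every positive integer $d \neq 4$, which closes the induction. There is no substantial obstacle remaining after the three inductive propositions; the only delicate point, and the reason to include the augmented base list $\{1,2,3,6,7\}$ rather than just $\{1,2,3\}$, is precisely to ensure that the recursion never has to pass through the exceptional value $d = 4$ ruled out by Todd's Theorem~\ref{Todd}.
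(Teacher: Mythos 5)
Your argument is correct and is essentially the paper's own (implicit) proof of Theorem \ref{T:Hd}: the base lemma for $d\in\{1,2,3,6,7\}$ combined with the three mod-$3$ inductive propositions, with exactly the bookkeeping you spell out. One cosmetic quibble: the base case $(H_7)$ is needed not because the reduction would pass through $(H_4)$ (it would land on $(H_3)$), but because the $d\equiv 1 \pmod 3$ proposition is only established for $k\ge 3$, i.e.\ $d\ge 10$; your formal induction respects this restriction, so the logic is unaffected.
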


\subsection{Proof of Theorem \ref{T:HH}}

Set $r_0$ equal to $\lfloor \frac{1}{d+1}\binom{d+3}{3}\rfloor$.
If $r\le r_0$ and $d\neq 4$ then Theorem \ref{T:Hd} guarantees the existence of a scheme $Y$ which is the union of $r_0$ pairwise disjoint involutive lines
such that the restriction morphism $H^0(\mathbb P^3, \mathcal O_{\mathbb P^3}(d)) \to H^0(Y,\mathcal O_Y(d))$ is surjective. After taking out $r_0-r$ lines from
$Y$ we obtain a scheme $\Sigma$ for which the analogous morphism is still surjective.

If $r> r_0$  and $d\neq 4$ then Theorem \ref{T:Hd} gives a scheme $Y$ equal to the union of $r_0$ pairwise disjoint involutives lines and $ q = (d+1) \left( \frac{1}{d+1} \binom{d+3}{3} - r \right)$ contained in another involutive line disjoint from all the previous ones such that $H^0(\mathbb P^3, \mathcal O_{\mathbb P^3}(d)) \to H^0(Y,\mathcal O_Y(d))$ is a bijection. We take $\Sigma$ as the union of lines in $Y$ together with the line determined by the $q$ points and $r-r_0 -1$ involutive lines disjoint from the other ones, to obtain a scheme for which the restriction morphism in degree $d$ is injective.

If $d=4$ then to conclude the proof of Theorem \ref{T:HH} it remains to check that the restriction maps is surjective  for $r<7$, and injective for $r>7$. It is sufficient to check surjectiveness for $r=6$ (the remaining cases can be  obtained from this one by taking lines out), and injectiveness for $r=8$ (where the remaining cases can be obtained from this
one by adding lines). Both verifications have been done by computer.
\qed

\bibliographystyle{amsplain}

 \end{document}